\documentclass [journal,onecolumn,11pt]{IEEEtran}
\usepackage{amsfonts,amsmath,amssymb}
\usepackage{indentfirst, setspace}
\usepackage{url,float}
\usepackage{color}
\usepackage[a4paper,ignoreall]{geometry}
\geometry{left=2cm, right=2cm, top=4cm, bottom=4cm}
\usepackage{longtable}
\usepackage{array}
\setcounter{tocdepth}{3}
\usepackage{graphicx}
\usepackage[a4paper,ignoreall]{geometry}
\usepackage{multicol}
\usepackage{stfloats}
\usepackage{enumerate}
\usepackage{cite}
\usepackage{amsthm}
\usepackage{multirow}
\usepackage{amssymb}
\usepackage{subeqnarray}
\usepackage{amssymb}
\usepackage{cases}
\usepackage[square, comma, sort&compress, numbers]{natbib}

\def\qu#1 {\fbox {\footnote {\ }}\ \footnotetext { From Qu: {\color{red}#1}}}
\def\hqu#1 {}
\def\kq#1 {\fbox {\footnote {\ }}\ \footnotetext { From KangQuan: {\color{blue}#1}}}
\def\hkq#1 {}

\usepackage{extarrows}
\usepackage{titlesec}
\usepackage{amssymb}
\usepackage{amsmath,amsthm,amssymb,color}
\usepackage[pdftex]{hyperref}
\setcounter{tocdepth}{3}
\usepackage{graphicx}
\usepackage[varg]{txfonts}
\usepackage{multirow}
\date{}
\usepackage{url}

\pagestyle{plain}

\newtheorem{conjecture}{Conjecture}

\newtheorem{lem}{Lemma}[section]
\newtheorem{thm}[lem]{Theorem}
\newtheorem{prop}[lem]{Proposition}

\newtheorem{defn}[lem]{Definition}

\begin{document}
\title{Further results on  permutation pentanomials over ${\mathbb F}_{q^3}$  in
	characteristic two }
\author{Tongliang Zhang\thanks{
		$ ^{\ast} $ Corresponding author.
		\newline \indent T. Zhang is with the College of Science, North China University of Science and Technology, Tangshan, Hebei, 063000, China, Hebei Key Laboratory of Data Science and Application, North China University
		of Science and Technology, Tangshan, Hebei, 063000, China;
		L. Zheng and H. Wang are with the School of Mathematics and Physics, University of South China, Hengyang, Hunan, 421001, China,~(E-mail:~zhenglijing817@163.com); J. Peng is with the Mathematics and Science College, Shanghai Normal University, Shanghai 200234, China;
		 Y. Li is with the Institute of Statistics and Applied Mathematics,
		Anhui University of Finance and Economics, Bengbu, Anhui, 233030, China.}, Lijing Zheng$ ^{\ast}$, Hengtai Wang, Jie Peng, Yanjun Li}
\maketitle

\maketitle
\begin{abstract}
 Let  $q=2^m.$     In a recent paper \cite{Zhang3}, Zhang and Zheng investigated several classes of permutation pentanomials  of the form $\epsilon_0x^{d_0}+L(\epsilon_{1}x^{d_1}+\epsilon_{2}x^{d_2})$ over ${\mathbb F}_{q^3}~(d_0=1,2,4)$
 from some certain  linearized polynomial $L(x)$ by using multivariate method and some   techniques  to determine the number of the solutions of some  equations. They proposed an open problem that there are still   some   permutation pentanomials  of that  form  have not been proven. In this paper, inspired by the idea of \cite{LiKK}, we further characterize the  
 permutation property of the  pentanomials with the above form  over ${\mathbb F}_{q^3}~(d_0=1,2,4)$.  The techniques in this paper would be useful to investigate more new
 classes of permutation pentanomials.

  \noindent {\bf Keywords}: Finite fields;  Permutation polynomials;     Pentanomials.  

 \noindent {\bf MSC}: 05A05; 11T06; 11T55
 \end{abstract}

\section{Introduction}

Let ${\mathbb F}_q$ be the finite field with $q$ elements. A polynomial $f\in \mathbb{F}_{q}[x]$ is called a \emph{permutation polynomial} (PP) over $\mathbb{F}_q$, if its associated polynomial mapping $f:c\mapsto f(c)$ from  ${\mathbb F}_q$ to itself is a bijection.
Permutation polynomials over finite fields
have applications in coding theory, cryptography, combinatorial designs, and other areas of mathematics and engineering. 
Information about
properties, constructions, and applications of permutation polynomials may be found in 
  the survey papers \cite{Hou3,Li3,Wang}.

Permutation polynomials with few terms have attracted
much attention for their simple algebraic forms (see for instance \cite{Ba2, Ding,Gupta,Hou1,Hou2,Hou3, Hou4,Li1,Li2,LiK,Pang,Peng,Zhang1,Zheng1,Zheng2,Zheng3}). 
The construction of permutation  pentanomials is an interesting
research problem. Let $q=2^m$.
Xu et al.  \cite{Xu} constructed some classes of  permutation pentanomials with the form $x^rh(x^{q-1})$ over
$ \mathbb{F}_{q^{2}}$.
Liu et al. \cite{Liu} studied the permutation property of pentanomials with the form $x^rh(x^{p^m-1})$ over
 $ \mathbb{F}_{p^{2m}}$ for $p=2,3.$
	Very recently,  Zhang et al. \cite{Zhang2} 
	investigated  all  new permutation pentanomials  of the form $$f(x)=x^t+x^{r_1(q-1)+t}+x^{r_2(q-1)+t}+x^{r_3(q-1)+t}+x^{r_4(q-1)+t}$$   with ${\rm gcd}(x^{r_4}+x^{r_3}+x^{r_2}+x^{r_1}+1,x^t+x^{t-r_1}+x^{t-r_2}+x^{t-r_3}+x^{t-r_4})=1, 4\leq t<100$ over $ \mathbb{F}_{q^2}$.  Utilizing  a well-known lemma, they transformed the problem of proving that $ f $ is a permutation of $ \mathbb{F}_{q^{2}} $ into
	showing that some specific equation over $ \mathbb{F}_{q} $ (related to $ f $) has no   certain solution  in $ \mathbb{F}_{q} $.

	
	The construction of permutation  polynomials  over  $\mathbb{F}_{q^3}$ are also of great interest recently. 
	In   a recent work, Wang et al.  \cite{Wang-Zhang-Zha} and Li et al. \cite{LiK} respectively proposed two classes of permutation trinomials of  the form $x^{d_1}+L(x^{d_2})$ with  the linearized polynomials $L(x)=x+x^q$ over $\mathbb{F}_{q^3}$ by applying the multivariate method.
	In \cite{Gupta1}, Gupta et al. constructed three classes of permutation trinomials over $\mathbb{F}_{q^3}$. In that paper, they also confirmed a conjecture proposed by Gong et al. \cite{Gong} about permutation trinomials of the form $x^{q+1}+L(x)\in\mathbb{F}_{q^3}[x]$.
	Recently, Pang et al.  \cite{Pang} investigated   permutation trinomials over $\mathbb{F}_{q^3}$ of the  form $x^d+L(x^s)$ with certain integers
	$d, s$ and  the linearized polynomials $L(x)=ax+bx^q\in\mathbb{F}_{q^3}[x]$.  They constructed a few classes of permutation trinomials   of that form
	by means of the iterative method, the multivariate method and  the resultant elimination. Motivated by \cite{Pang},   Zheng et al. \cite{Zheng4} studied two classes of permutation trinomials  over ${\mathbb F}_{q^3}$ of the form $f(x)=ax +L(x^s)$, where
	$L(x)$ is a linearized polynomial, by transforming the problem into dealing with some equations over
	${\mathbb F}_{q^3}$. 
	Very recently, Qu et al. \cite{Qu} studied  a bijection from
	the multiplicative subgroup  $U_{q^2+q+1}$  of 	${\mathbb F}_{q^3}$ with order
	$q^2+q+1$ to the projective plane ${\rm  PG}(2, q)$. In that paper, some explicit permutation polynomials of the form $x^rh(x^{q-1})$
	over ${\mathbb F}_{q^3}$  were constructed via bijections of ${\rm  PG}(2, q)$.

    Permutation pentanomials over $\mathbb{F}_{q^3}$ 
	have not been well investigated so far, and  there are very few related conclutions.
	 Rencently,   Zhang and Zheng \cite{Zhang3} investigated several classes of permutation pentanomials  of the form
	 \begin{equation}\label{0}
	 \epsilon_0x^{d_0}+L(\epsilon_{1}x^{d_1}+\epsilon_{2}x^{d_2})
	 \end{equation}
	 over ${\mathbb F}_{q^3}~(d_0=1,2,4)$
	from some certain  linearized polynomial $L(x)$ by using different methods. Numerical
	results suggest that there exist large classes of permutation pentanomials of   form (\ref{0})
	over $\mathbb{F}_{q^3}$.  
	 However,  some   permutation pentanomials  of form (\ref{0})  over $\mathbb{F}_{q^3}$  have not been proven yet.
	 
	In 2023,   Li and Nikolay \cite{LiKK} presented two infinite families of {\rm APN} functions in triviariate form over finite fields of the form $\mathbb{F}_{2^{3m}}$.
	They also proved the permutation property of the functions from both families when $m$ is odd. Inspired by their methods     in that paper,  we further characterize the  
	permutation property of the  pentanomials with    form (\ref{0})
	over $\mathbb{F}_{q^3}$.

 The remainder of this paper is organized as follows. In Section \ref{section 2}, we recall some basic notions and related results.
 In Section \ref{section 4}, we
 investigate several classes of permutation  pentanomials over $\mathbb{F}_{q^3}$ of the form form (\ref{0}) with $L(x)=x+x^q$.  In
 Section \ref{section 7}, we   discuss the quasi-multiplicative equivalence between
 permutation pentanomials proposed in this paper and the known ones. In Section \ref{section 8}, we  conclude this paper.

\section{Preliminaries}\label{section 2}

 Throughout this paper, let  $m,l$  be two positive integers and $q=2^m$, we use ${\rm Tr}_{q^l/q}(\cdot)$ to denote the \emph{trace function} from
 ${\mathbb F}_{q^l}$ to ${\mathbb F}_{q}$, that is
 $${\rm Tr}_{q^l/q}(x)=x+x^{q}+x^{q^2}+\cdots +x^{q^{l-1}}.$$
 If $q=2$, ${\rm Tr}_{q^l/q}(\cdot)$ is called the absolute trace function, and it is simply deoted by ${\rm Tr}_{q^l}(\cdot)$.

  Let $k$ be a positive integer, if ${\rm gcd}(k,q-1)=1$, it is clear that $x^k$ is a permutation of ${\mathbb F}_{q}$. For $\alpha\in {\mathbb F}_{q}$,   denote by $\alpha^{\frac{1}{k}}$ the unique element $A$ in ${\mathbb F}_{q}$ such that $A^k=\alpha$.

  We need the following definition.

 \begin{defn}(\cite{LN})
 	Let $f(x) = a_0x^n + a_1x^{n-1}+\cdots+a_n\in{\mathbb F}_{q}[x]$ and $g(x) = b_0x^m + b_1x^{m-1}+\cdots+b_m\in{\mathbb F}_{q}[x]$ be two polynomials
 	of  degree $n>0$ and $m>0$, respectively.
 	Then the resultant 	of  the two polynomials with respect to $x$ is defined by the determinant

 	$$
 	R(f,g,x)=
 	\begin{vmatrix}
 	a_0& a_1 & \cdots&  a_n & 0&\cdots&\cdots&0\\
 	0 & a_0 & a_1 & \cdots & a_n&0&\cdots&0  \\
 	\vdots& \vdots & \vdots & \vdots &\vdots&\vdots&\vdots&\vdots\\
 	0 & \cdots & 0 & a_0 & a_1&a_2&\cdots&a_n  \\
 	b_0& b_1 & \cdots&  \cdots & b_m&0&\cdots&0\\
 	0 & b_0 & b_1 & \cdots & \cdots&b_m&\cdots&0  \\
 	\vdots& \vdots & \vdots & \vdots &\vdots&\vdots&\vdots&\vdots\\
 	0 & \cdots & 0 & b_0 & b_1&\cdots&\cdots&b_m  \\
 	\end{vmatrix}
 	$$
 \end{defn}
 of order $m+n$.

If $f(x)=a_0(x-\alpha_1)(x-\alpha_2)\cdots(x-\alpha_n)$, where $a_0\neq 0$, in the splitting field of $f$ over ${\mathbb F}_{q}$, then $R(f,g,x)$ is also given by the formula
$$R(f,g,x)=a_0^m\prod_{i=1}^ng(\alpha_i).$$
It is well known that $R(f,g,x)=0$ if and only if $f$ and $g$ have a common root in
${\mathbb F}_{q}$, which is the same as saying that $f$ and $g$ have a common divisor in ${\mathbb F}_{q}[x]$ of positive degree (\cite{LN}).

 The following lemma  will be used in our proof.   

\begin{lem}\label{two}\cite{Li2}
	Let $a,b \in \mathbb{F}_{q}$, where $q=2^m$ and $a\neq 0.$  Then $x^2+ax+b=0$ has exactly two solutions in $\mathbb{F}_{q}$ if and only if ${\rm Tr}_{q}\Big(\frac{b}{a^2}\Big)=0$; otherwise, it has no solutions in $ \mathbb{F}_{q} $.
\end{lem}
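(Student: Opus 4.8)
The plan is to reduce the given quadratic to a standard Artin--Schreier equation and then invoke the additive analogue of Hilbert's Theorem 90. First I would exploit the hypothesis $a \neq 0$ by substituting $x = ay$; since $y \mapsto ay$ is a bijection of $\mathbb{F}_q$, the number of solutions of $x^2 + ax + b = 0$ in $\mathbb{F}_q$ equals the number of solutions of
\[
y^2 + y = \frac{b}{a^2}
\]
in $\mathbb{F}_q$. Writing $c = b/a^2$, it therefore suffices to show that $y^2 + y = c$ has exactly two solutions when ${\rm Tr}_q(c) = 0$ and none otherwise.

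Next I would study the Artin--Schreier map $\phi(y) = y^2 + y$. Because the Frobenius $y \mapsto y^2$ is additive in characteristic two, $\phi$ is an $\mathbb{F}_2$-linear endomorphism of $\mathbb{F}_q$. Its kernel is exactly $\{y : y^2 = y\} = \{0,1\} = \mathbb{F}_2$, of size $2$, so each nonempty fibre of $\phi$ is a coset of $\mathbb{F}_2$ and hence has precisely two elements. This already establishes the dichotomy ``$0$ or $2$ solutions''; what remains is to identify the image of $\phi$.

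The key step is to prove that ${\rm Im}\,\phi = \ker {\rm Tr}_q$. The inclusion $\subseteq$ is immediate: using that the trace is invariant under Frobenius, ${\rm Tr}_q(y^2) = {\rm Tr}_q(y)$, so ${\rm Tr}_q(\phi(y)) = {\rm Tr}_q(y^2) + {\rm Tr}_q(y) = 0$ for every $y \in \mathbb{F}_q$. For the reverse inclusion I would argue by cardinality over $\mathbb{F}_2$: since $\ker\phi$ has size $2$, the image has size $q/2$; and since ${\rm Tr}_q : \mathbb{F}_q \to \mathbb{F}_2$ is a nonzero $\mathbb{F}_2$-linear functional (the trace is surjective), its kernel also has size $q/2$. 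A subspace contained in another of the same finite cardinality must coincide with it, whence ${\rm Im}\,\phi = \ker {\rm Tr}_q$.

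Combining these facts, $y^2 + y = c$ is solvable in $\mathbb{F}_q$ if and only if $c \in \ker {\rm Tr}_q$, i.e.\ ${\rm Tr}_q(c) = {\rm Tr}_q\!\big(b/a^2\big) = 0$, in which case there are exactly two solutions, and otherwise none. Translating back through $x = ay$ yields the claim. The only genuinely non-formal point is the reverse inclusion $\ker {\rm Tr}_q \subseteq {\rm Im}\,\phi$, which I expect to be the main obstacle; the equal-cardinality argument above settles it cleanly without having to exhibit a root explicitly (an explicit half-trace formula is available when $m$ is odd, but it is not needed here).
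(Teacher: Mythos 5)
Your argument is correct. The paper does not prove this lemma at all --- it is quoted from the reference [Li--Helleseth] as a known fact --- so there is nothing to compare against; for the record, your reduction to the Artin--Schreier equation $y^2+y=b/a^2$ via $x=ay$, together with the observation that $\phi(y)=y^2+y$ is $\mathbb{F}_2$-linear with kernel $\{0,1\}$ and image equal to $\ker\mathrm{Tr}_q$ by the equal-cardinality argument, is the standard and complete proof of this statement.
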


	Let $ q=2^m$ and    $f(x)=x^3+ax+b\in \mathbb{F}_{q}[x]$. If $f(x)$ factors over $\mathbb{F}_{q}$ as a
product of three linear factors, we write $f=(1,1,1)$; if $f(x)$ factors over $\mathbb{F}_{q}$ as a
product of a linear factor and an irreducible quadratic factor, we write
$f = (1, 2)$; and  if $f(x)$ is itself irreducible over $\mathbb{F}_{q}$, we write $f = (3)$.
Let $ a, b\in \mathbb{F}_{q} $ with $ b\neq 0 $. If  $ {\rm Tr}_{q}\Big(\frac{a^3}{b^2}\Big)={\rm Tr}_{q}(1) $, according to Lemma \ref{two}, we have that  $ x^2+bx+a^3=0 $ has exactly two solutions in $ \mathbb{F}_{q} $ if $ m $ is even (in $ \mathbb{F}_{q^{2}}\backslash \mathbb{F}_{q}  $ if $ m $ is odd, respectively). Denote these two solutions by $ y_{1} $ and $ y_{2} $ when $ {\rm Tr}_{q}\Big(\frac{a^3}{b^2}\Big)={\rm Tr}_{q}(1) $.
The following lemma   describes the factorization of a cubic polynomial over  $\mathbb{F}_{q}$.

\begin{lem} \label{three}\cite{KS}
	The factorization of  $f(x)=x^3+ax+b~(b\neq 0)$  over  $\mathbb{F}_{q}$  is characterized as follows:

	(1) $f=(1,1,1)$ if and only if ${\rm Tr}_{q}\Big(\frac{a^3}{b^2}\Big)={\rm Tr}_{q}(1),$ $y_{1}$ and $y_{2}$ cubes in $\mathbb{F}_{q}$ ($m$ even), $\mathbb{F}_{q^2}$ ($m$ odd);
	
	(2) $f=(1,2)$  if and only if ${\rm Tr}_{q}\Big(\frac{a^3}{b^2}\Big)\neq {\rm Tr}_{q}(1)$;
	
	(3) $f=(3)$   if and only if  ${\rm Tr}_{q}\Big(\frac{a^3}{b^2}\Big)={\rm Tr}_{q}(1),$ $y_{1}$ and $y_{2}$  not cubes in $\mathbb{F}_{q}$ ($m$ even), $\mathbb{F}_{q^{2}}$ ($m$ odd).
\end{lem}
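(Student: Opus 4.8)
The plan is to read off the factorization type of $f$ from the Galois group of its splitting field over $\mathbb{F}_q$. Because $\mathbb{F}_q$ is finite this group is cyclic, hence a subgroup of $S_3$ equal to one of $\{1\}$, $C_2$, $C_3$, which correspond exactly to the three types $(1,1,1)$, $(1,2)$, $(3)$; so the statement is really a determination of which cyclic group occurs. As a preliminary step I would record that $b\neq 0$ forces $f$ to be separable: in characteristic two $f'(x)=x^2+a$ has its only root at $s$ with $s^2=a$, and $f(s)=s^3+as+b=b\neq 0$, so $\gcd(f,f')=1$ and the three types are genuinely exhaustive and mutually exclusive. I would also note that the usual discriminant is useless, since $-4a^3-27b^2=b^2$ is automatically a square here; the correct invariant is the Artin--Schreier (trace) quantity furnished by Lemma \ref{two}.

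The engine is the Lagrange resolvent. Let $\alpha_1,\alpha_2,\alpha_3$ be the roots of $f$ and let $\omega$ be a primitive cube root of unity, which lies in $\mathbb{F}_q$ when $m$ is even (as $3\mid q-1$) and in $\mathbb{F}_{q^2}\setminus\mathbb{F}_q$ when $m$ is odd (as $3\mid q+1$). Put $u=\alpha_1+\omega\alpha_2+\omega^2\alpha_3$, $v=\alpha_1+\omega^2\alpha_2+\omega\alpha_3$, and $y_1=u^3$, $y_2=v^3$. Using $\alpha_1+\alpha_2+\alpha_3=0$, $\sum_{i<j}\alpha_i\alpha_j=a$, $\alpha_1\alpha_2\alpha_3=b$ together with $1+\omega+\omega^2=0$, I would compute $uv=a$ and $u+v=\alpha_1$, whence $y_1y_2=a^3$ and $y_1+y_2=(u+v)^3+uv(u+v)=\alpha_1^3+a\alpha_1=b$. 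Thus $y_1,y_2$ are exactly the roots of $y^2+by+a^3=0$, which identifies the auxiliary quadratic in the statement, and by Lemma \ref{two} they lie in $\mathbb{F}_q$ precisely when ${\rm Tr}_q(a^3/b^2)=0$.

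Part (2) then comes from deciding whether the group sits inside $A_3=C_3$. A $3$-cycle sends $u\mapsto\omega^{2}u$, hence fixes $y_1$ and $y_2$, while a transposition interchanges $y_1,y_2$; so lying in $A_3$ means the Frobenius fixes $y_1,y_2$. \emph{This parity analysis is the main obstacle}: for $m$ odd the Frobenius also acts on $\omega$ via $\omega\mapsto\omega^q=\omega^2$, and a short computation shows $u^q=v$ in the totally split case but $u^q=u$ in the $(1,2)$ case. Reconciling the parities, one finds that for $m$ even the group lies in $A_3$ iff the resolvent splits over $\mathbb{F}_q$ (i.e. ${\rm Tr}_q(a^3/b^2)=0={\rm Tr}_q(1)$), whereas for $m$ odd it lies in $A_3$ iff the resolvent is irreducible over $\mathbb{F}_q$ (i.e. ${\rm Tr}_q(a^3/b^2)=1={\rm Tr}_q(1)$); in both cases this is the single condition ${\rm Tr}_q(a^3/b^2)={\rm Tr}_q(1)$, the term ${\rm Tr}_q(1)$ being precisely what absorbs the parity of $m$. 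Equivalently $f=(1,2)$ iff ${\rm Tr}_q(a^3/b^2)\neq{\rm Tr}_q(1)$, which is (2).

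Finally, assuming ${\rm Tr}_q(a^3/b^2)={\rm Tr}_q(1)$ (group $\subseteq C_3$), I would separate (1) from (3) by whether $u$ already lies in the field $K$ containing $\omega$, namely $K=\mathbb{F}_q$ for $m$ even and $K=\mathbb{F}_{q^2}$ for $m$ odd. Indeed $f$ splits completely iff all $\alpha_i\in\mathbb{F}_q$, and from $\alpha_1=u+v$ with $v=a/u$ one checks this is equivalent to $u\in K$; since $y_1=u^3$ and $K$ contains the cube roots of unity, $u\in K$ is in turn equivalent to $y_1$ (equivalently $y_2$, as $y_1y_2=a^3$ is already a cube) being a cube in $K$. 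If $y_1$ is not a cube in $K$ then $u\notin K$, forcing $\alpha_1\notin\mathbb{F}_q$ (using $\mathbb{F}_{q^2}\cap\mathbb{F}_{q^3}=\mathbb{F}_q$ in the odd case), and since the group is contained in $C_3$ the polynomial $f$ is irreducible. This yields (1) and (3). The delicate point throughout is the bookkeeping of the Frobenius action on $\omega$ in paragraph three: getting it right is exactly what produces the uniform criterion ${\rm Tr}_q(a^3/b^2)={\rm Tr}_q(1)$ and the switch between cubes in $\mathbb{F}_q$ and in $\mathbb{F}_{q^2}$.
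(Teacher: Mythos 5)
Your argument is correct. Note that the paper does not prove this lemma at all --- it is quoted verbatim from Williams \cite{KS} --- so there is no internal proof to compare against; your Lagrange-resolvent derivation is essentially the classical argument behind the cited result: the resolvent quadratic you obtain, $y^2+by+a^3=0$, is exactly the auxiliary quadratic introduced in the paragraph preceding the lemma, your parity bookkeeping for the Frobenius action on $\omega$ (using $\omega^q=\omega^2$ for $m$ odd, together with $y_1\neq y_2$ because $y_1+y_2=b\neq0$) correctly yields the uniform criterion ${\rm Tr}_{q}\big(\frac{a^3}{b^2}\big)={\rm Tr}_{q}(1)$ for the group to lie in $A_3$, and the cube test in $K=\mathbb{F}_q(\omega)$ matches items (1) and (3). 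The only step worth tightening is $v=a/u$, which silently assumes $u\neq0$ (equivalently $a\neq0$); the excluded case is harmless, since for $a=0$ one has $y_1=0$ (trivially a cube) and the criterion reduces to $b$ being a cube in $K$, which agrees with the direct factorization of $x^3+b$.
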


 Leonard and  Williams  characterized the factorization of the quartic polynomial
$f(x)=x^4+a_2x^2+a_1x+a_0$ over $\mathbb{F}_{q}$, where $q=2^m$. Similar to
 the statement above Lemma \ref{three}, the
 shorthand will be used in the following lemma. For example, if $f(x)$
factors as a product of two linear factors times an irreducible
quadratic, we write $f=(1,1,2)$.
We use $r_1,r_2,r_3$ below to indicate roots of the cubic  equation  $g(y)=y^3+a_2y+a_1$
when they exist in $\mathbb{F}_{q}$, and set $w_i=a_0r_{i}^2/a_{1}^{2}$ in this case.

\begin{lem} \label{four}\cite{Le}
	The factorization of  $f(x)=x^4+a_2x^2+a_1x+a_0$  over  $\mathbb{F}_{q}$  is characterized as follows:

	(1) $f=(1,1,1,1)$ if and only if $g=(1,1,1)$ and ${\rm{Tr}}_{q}(w_1)={\rm{Tr}}_{q}(w_2)={\rm{Tr}}_{q}(w_3)=0$;
	
	(2)  $f=(2,2)$ if and only if $g=(1,1,1)$  and ${\rm{Tr}}_{q}(w_1)=0,{\rm{Tr}}_{q}(w_2)={\rm{Tr}}_{q}(w_3)=1$;
	
	(3) $f=(1,3)$   if and only if   $g=(3)$;
	
	(4) $f=(1,1,2)$   if and only if   $g=(1,2)$ and ${\rm{Tr}}_{q}(w_1)=0$;
	
	(5) $f=(4)$   if and only if   $g=(1,2)$ and ${\rm{Tr}}_{q}(w_1)=1$.
	
\end{lem}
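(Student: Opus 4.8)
The plan is to work with the four roots of $f$ in a fixed algebraic closure $\overline{\mathbb{F}}_q$ and to read off the factorization type of $f$ from the action of the Frobenius map $\phi\colon x\mapsto x^q$ on these roots, using the resolvent cubic $g$ as the organizing device. First I would record the constraint forced by the shape of $f$: since there is no $x^3$ term, the four roots $\alpha_1,\alpha_2,\alpha_3,\alpha_4$ satisfy $\alpha_1+\alpha_2+\alpha_3+\alpha_4=0$, so the three distinct pairwise sums collapse in pairs, namely $\alpha_1+\alpha_2=\alpha_3+\alpha_4$, $\alpha_1+\alpha_3=\alpha_2+\alpha_4$, and $\alpha_1+\alpha_4=\alpha_2+\alpha_3$. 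Throughout I assume the nondegenerate situation implicit in the statement, in particular $a_1\neq 0$ and distinct roots.

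The key reduction is to link $f$ to $g$ through factorizations into two monic quadratics. Because the $x^3$-coefficient vanishes, the two quadratics must share the same linear coefficient $s$, so I write $f(x)=(x^2+sx+t)(x^2+sx+t')$. Comparing coefficients gives $t+t'+s^2=a_2$, $s(t+t')=a_1$, and $tt'=a_0$; eliminating $t+t'$ yields exactly $s^3+a_2s+a_1=0$, i.e. $s$ is a root of $g$. Hence each rational root $r_i\in\mathbb{F}_q$ of $g$ coincides with one of the pairwise sums above and produces a factorization of $f$, with $\{t,t'\}$ the two roots of $z^2+\frac{a_1}{r_i}z+a_0=0$. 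Applying Lemma~\ref{two} to this last quadratic, its roots lie in $\mathbb{F}_q$ precisely when ${\rm Tr}_q\big(a_0 r_i^2/a_1^2\big)={\rm Tr}_q(w_i)=0$. This is the bridge between the trace hypotheses and whether the corresponding quadratic factors of $f$ are actually defined over $\mathbb{F}_q$.

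With this dictionary I would run a case analysis on the type of $g$, translated into the Frobenius cycle type on $\{\alpha_1,\alpha_2,\alpha_3,\alpha_4\}$. If $g=(3)$, no pairwise sum is rational, so $f$ admits no quadratic factor over $\mathbb{F}_q$; among the five possible types the only survivor is $f=(1,3)$, giving (3). If $g=(1,2)$, exactly one pairwise sum $r_1$ is rational, so $f$ is either an irreducible quadratic times two linear factors or irreducible, and I separate these by whether the pair $\{t,t'\}$ attached to $r_1$ is rational, i.e. by ${\rm Tr}_q(w_1)=0$ versus $1$, yielding (4) and (5). The delicate case is $g=(1,1,1)$, where all three $r_i$ are rational and one must distinguish $f=(1,1,1,1)$ from $f=(2,2)$: now all three values ${\rm Tr}_q(w_i)$ are meaningful, and the crux is that an individual root can be recovered as a ratio of rational quantities, for instance from $\alpha_1(\alpha_2+\alpha_3)=\alpha_1\alpha_2+\alpha_1\alpha_3$ together with $\alpha_2+\alpha_3=r_3\in\mathbb{F}_q$, once enough pairwise products $\alpha_i\alpha_j$ are forced into $\mathbb{F}_q$ by the trace conditions. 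Showing that all three ${\rm Tr}_q(w_i)=0$ drives every root into $\mathbb{F}_q$ (type $(1,1,1,1)$) while the mixed trace pattern pins down the $(2,2)$ splitting is the main obstacle, and it is here that the degenerate possibilities (a vanishing $r_i$, $a_1=0$, or repeated roots) must be excluded or treated separately so that the divisions used in the recovery step are legitimate.
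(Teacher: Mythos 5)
The paper does not prove this lemma; it is quoted directly from Leonard and Williams \cite{Le}, so there is no internal argument to measure you against. Your plan is, in substance, the classical proof of that result: write $f(x)=(x^2+sx+t)(x^2+sx+t')$ (the two linear coefficients must agree because the $x^3$-term vanishes and we are in characteristic $2$), deduce $s^3+a_2s+a_1=0$ so that $s$ ranges over the pairwise sums of roots of $f$ and these are exactly the roots of the resolvent $g$, and then apply Lemma \ref{two} to $z^2+\frac{a_1}{r_i}z+a_0$ to decide whether the pair $\{t,t'\}$ attached to a rational root $r_i$ is rational; this is precisely the bridge between the trace conditions ${\rm Tr}_{q}(w_i)$ and the factorization type. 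The degeneracies you set aside all evaporate at once: the quantities $w_i$ only make sense when $a_1\neq 0$, and then $f'=a_1\neq 0$ makes $f$ separable, $g(0)=a_1\neq 0$ shows no $r_i$ vanishes, $g$ is likewise separable, the three pairwise sums are distinct and nonzero (so they are exactly the roots of $g$), and $t\neq t'$ in every partition (a double root of $z^2+\frac{a_1}{r_i}z+a_0$ would force $a_1=0$).

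The one step you flag as the main obstacle does not need the root-recovery argument you sketch; it falls out of the Frobenius bookkeeping you already set up. Frobenius permutes the three partitions of $\{\alpha_1,\dots,\alpha_4\}$ into pairs exactly as it permutes the roots of $g$, and the kernel of the induced map $S_4\to S_3$ is the Klein four-group. So $g=(1,1,1)$ forces Frobenius on the roots of $f$ to be the identity or a product of two disjoint transpositions, i.e.\ $f=(1,1,1,1)$ or $f=(2,2)$ and nothing else. In the first case every product $\alpha_i\alpha_j$ is rational, so all three traces vanish; in the second, Frobenius equal to $(12)(34)$ fixes $\alpha_1\alpha_2$ and $\alpha_3\alpha_4$ but swaps $\alpha_1\alpha_3\leftrightarrow\alpha_2\alpha_4$ and $\alpha_1\alpha_4\leftrightarrow\alpha_2\alpha_3$, and since these swapped elements are distinct they are irrational, giving the trace pattern $(0,1,1)$. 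As the two cases exhaust the possibilities and produce different patterns, both directions of (1) and (2) follow with no further work. The same observation is what you actually need in the case $g=(3)$: a $4$-cycle induces a transposition (not a $3$-cycle) on the three partitions, so $f=(4)$ is incompatible with $g=(3)$; the remark that $f$ "admits no quadratic factor over $\mathbb{F}_q$" does not by itself separate $(1,3)$ from $(4)$, since an irreducible quartic also has no rational quadratic factor.
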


\section{Permutation  pentanomials  over $\mathbb{F}_{q^3}$}\label{section 4}

In this section, we shall present  several classes of permutation  pentanomials  over  $\mathbb{F}_{q^3}$ of the form
$\epsilon_{0}x^{d_0}+L(\epsilon_{1}x^{d_1}+\epsilon_{2}x^{d_2})$ for $d_0=1, 2, 4$, here $L(x)=x+x^q$.

\begin{thm}\label{1}
	Let    $m$ be an  integer with $  q=2^m$. Then
	\begin{equation*}
	f(x)=  x + x^{2q^2+2}+x^{2q+2}+x^{2}+x^{2q}
	\end{equation*}
	is a permutation polynomial over $\mathbb{F}_{q^3}$. 	
\end{thm}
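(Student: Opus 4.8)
The plan is to exploit the additive structure hidden in $f$ once the value of the trace is fixed. Writing $L(t)=t+t^{q}$, one checks directly that $f(x)=x+L\!\left(x^{2}+x^{2q^{2}+2}\right)$, and the first observation to record is that ${\rm Tr}_{q^3/q}(L(t))={\rm Tr}_{q^3/q}(t+t^{q})=0$ for every $t\in\mathbb{F}_{q^3}$, since the trace is invariant under the Frobenius map. Hence ${\rm Tr}_{q^3/q}(f(x))={\rm Tr}_{q^3/q}(x)$ for all $x$, so $f$ sends each trace fibre $H_c=\{x\in\mathbb{F}_{q^3}:{\rm Tr}_{q^3/q}(x)=c\}$, $c\in\mathbb{F}_q$, into itself. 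As $\mathbb{F}_{q^3}$ is the disjoint union of the $q$ fibres $H_c$, each of cardinality $q^{2}$, it suffices to prove that $f$ restricts to a bijection of every $H_c$.

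The second step linearises $f$ on a fixed fibre. Putting $u=x$, $v=x^{q}$, $w=x^{q^{2}}$ one gets $f(x)=u+v^{2}+u^{2}\!\left(1+v^{2}+w^{2}\right)$; using $v^{2}+w^{2}=(v+w)^{2}$ and $v+w={\rm Tr}_{q^3/q}(x)+u$ this becomes, on $H_c$, the expression $f(x)=x^{4}+(1+c^{2})x^{2}+x+x^{2q}=:\phi_c(x)$. The map $\phi_c$ is $\mathbb{F}_2$-linear (a linearised polynomial), and by Step~1 it carries the trace-zero hyperplane $V_0=H_0$ into itself. Since $H_c$ is a coset of $V_0$ and $f|_{H_c}=\phi_c|_{H_c}$, the map $f|_{H_c}$ is a bijection of $H_c$ if and only if $\phi_c$ is injective on $V_0$, that is, if and only if $\ker\phi_c\cap V_0=\{0\}$ for every $c\in\mathbb{F}_q$. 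This identity is the core lemma I would isolate.

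The third step, which I expect to be the main obstacle, is to establish $\ker\phi_c\cap V_0=\{0\}$. I would pass to the conjugate variables $z_0=z,\ z_1=z^{q},\ z_2=z^{q^{2}}$ of a putative nonzero $z\in V_0$. From $\phi_c(z)=0$ and its Frobenius conjugate, together with the trace relation $z_2=z_0+z_1$, one obtains $z_1^{2}=z_0^{4}+(1+c^{2})z_0^{2}+z_0$ and $z_0^{2}=z_1^{4}+c^{2}z_1^{2}+z_1$ (the third conjugate equation being a consequence of these). Using the first relation to replace $z_1^{2}$ and $z_1^{4}$ in the second expresses $z_1$ as a linearised polynomial in $z_0$; substituting this back into the first relation produces a single explicit linearised equation in $z_0$ with coefficients in $\mathbb{F}_q$, and imposing the Frobenius constraint $z_1=z_0^{q}$ turns the core lemma into the assertion that this equation has no nonzero solution in $\mathbb{F}_{q^3}$.

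I expect the genuine difficulty to be this final root count. Rather than attacking the resulting linearised equation over $\mathbb{F}_{q^3}$ directly, the intended route (using the factorisation apparatus of Section~\ref{section 2}) is to descend to a polynomial of degree at most $4$ over $\mathbb{F}_q$ — obtained by forming suitable norms or symmetric combinations of the conjugates $z_0,z_1,z_2$ — and then to read off from Lemma~\ref{three} or Lemma~\ref{four}, via the trace criteria on the relevant coefficients, that no root corresponds to a nonzero $z\in V_0$; Lemma~\ref{two} handles the quadratic sub-cases. Organising this case analysis uniformly in the parameter $c\in\mathbb{F}_q$, and in the parity of $m$ (equivalently, whether $3\mid q-1$, which controls the cube conditions in Lemma~\ref{three}), is the delicate part. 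Once the core lemma is in place, bijectivity on each trace fibre, and hence the permutation property of $f$, follows immediately from Steps~1 and~2.
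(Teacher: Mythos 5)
Your Steps 1 and 2 are correct and give a clean reduction: the identity $f(x)=x+L(x^{2}+x^{2q^{2}+2})$ with $L(t)=t+t^{q}$ does force ${\rm Tr}_{q^3/q}(f(x))={\rm Tr}_{q^3/q}(x)$, the computation $f(x)=x^{4}+(1+c^{2})x^{2}+x+x^{2q}$ on the fibre $H_c$ checks out, and injectivity of $f$ is indeed equivalent to $\ker\phi_c\cap V_0=\{0\}$ for all $c\in\mathbb{F}_q$. This is in substance the same reduction the paper makes by considering $f(x+a)=f(x)$: there the difference $a$ plays the role of your kernel element $z$, and the relation $a+a^{q}+a^{q^{2}}=0$ obtained by summing the three conjugate equations is exactly your condition $z\in V_0$. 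So up to this point you have merely rephrased, not yet proved, the theorem.

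The genuine gap is Step 3, which you explicitly leave as a plan. Your elimination produces (after substituting (I) into (II) and back) a linearised equation of degree $16$ in $z_0$ with coefficients depending on the parameter $c$, and you then say the intended route is to "descend to a polynomial of degree at most $4$ over $\mathbb{F}_q$" and invoke Lemmas \ref{two}--\ref{four}; but no such descent is exhibited, and it is not clear how the cube/trace criteria of those lemmas would be organised uniformly in $c$. The paper closes this step quite differently and without any case split on $c$ or on the parity of $m$: from the first two conjugate equations it extracts two expressions for $x^{2}+y^{2}+z^{2}$ (which in your notation is just $c^{2}$), namely $\frac{a+a^{4}+a^{2}+b^{2}}{a^{2}}=\frac{b+b^{4}+a^{2}}{b^{2}}$ with $b=a^{q}$; equating them eliminates $x$ entirely and yields the identity $(ab+a+b)(ab+a+b^{2})(a^{2}+ab+b)=0$, and each factor is ruled out by a short Frobenius-iteration argument (e.g.\ $ab+a+b=0$ forces $a^{q^{2}}=a$ together with $a+a^{q}+a^{q^{2}}=0$, a contradiction). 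Your proposal would become a proof if you replaced the degree-$16$ elimination by this step: divide your relation (I) by $z_0^{2}$ and its conjugate (II) by $z_1^{2}$, equate the two resulting expressions for $c^{2}$, and factor. As written, however, the decisive argument is missing.
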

\begin{proof}
In the following, we shall show that, for any  $ a\in\mathbb{F}^*_{q^3}$,  the equations $f(x+a)=f(x)$ 	 has no solution in $\mathbb{F}_{q^3}$. Assume, on the
 contrary, that there exist $x\in\mathbb{F}_{q^3}$ and  $ a\in\mathbb{F}^*_{q^3}$ such that $f(x+a)=f(x)$. Let $y=x^q,z=x^{q^2},b=a^q,c=a^{q^2}$. Then we have  the following equations  of system
 \begin{align*}\label{2.1}
 \begin{split}
 \left \{
 \begin{array}{ll}
 (x+a)+(x+a)^2(z+c)^2+(x+a)^2(y+b)^2+(x+a)^2+(y+b)^2=x+x^2z^2+x^2y^2+x^2+y^2,                   \\
 (y+b)+(y+b)^2(x+a)^2+(y+b)^2(z+c)^2+(y+b)^2+(z+c)^2=y+y^2x^2+y^2z^2+y^2+z^2,                  \\
  (z+c)+(z+c)^2(y+b)^2+(z+c)^2(x+a)^2+(z+c)^2+(x+a)^2=z+z^2y^2+z^2x^2+z^2+x^2,                  \\
 \end{array}
 \right.
 \end{split}
 \end{align*}
which can be reduced to  
	\begin{align*}
	\begin{split}
	\left \{
	\begin{array}{ll}
	(b^2+c^2)x^2+a^2y^2+a^2z^2+a+a^2c^2+a^2b^2+a^2+b^2=0,                   \\
	(a^2+c^2)y^2+b^2z^2+b^2x^2+b+b^2a^2+b^2c^2+b^2+c^2=0,                  \\
	(a^2+b^2)z^2+c^2x^2+c^2y^2+c+c^2b^2+c^2a^2+c^2+a^2=0.                  \\
	\end{array}
	\right.
	\end{split}
	\end{align*}	
Adding the  three equations above, we get	$a+b+c=0.$ Plugging $c=a+b$ into the first two equations of the above system, we have 
$$x^2+y^2+z^2=\frac{a+a^4+a^2+b^2}{a^2}=\frac{b+b^4+a^2}{b^2}.$$
Therefore, we get the following equations, 
\begin{equation*}
\begin{split}
&ab^2+a^4b^2+a^2b^2+b^4+a^2b+a^2b^4+a^4\\
&= (ab+a+b)(ab+a+b^2)(a^2+ab+b)=0.
\end{split}
\end{equation*}	
	
	The following proof is divided into two cases.
	
   \textbf{Case 1.} $ab+a+b=0$.	

	In this case, we have $a\neq 1$ and $b=\frac{a}{a+1}$. Then $c=\frac{b}{b+1}=a$, which contradicts the facts  $a+b+c=0$ and $a\neq 0$.
	
	\textbf{Case 2.} $ab+a+b^2=0$.	
	
	In this case, we have $b\neq 1$, $a=\frac{b^2}{b+1}$. Then    $b=\frac{c^2}{c+1}$ and $a=\frac{c^4}{c^3+1}$. Therefore, we have  
	$$a+b+c=\frac{c^4}{c^3+1}+\frac{c^2}{c+1}+c=\frac{c(c+1)^3}{c^3+1},$$
	which implies that $c=0$ or $1$,  a contradiction.
	
	Similarly, we have that $a^2+ab+b\neq 0.$   The proof is complete.
\end{proof}

	The proof of Theorem  \ref{1} is inspired by the idea of \cite{LiKK}. 
	
	\begin{thm}
	Let    $m$ be an  integer with $  q=2^m$. Then
	\begin{equation*}
	f(x)=  x + x^{4q^2+1}+x^{q+4}+x^{q^2+4}+x^{4q+1}
	\end{equation*}
	is a permutation polynomial over $\mathbb{F}_{q^3}$ if and only if $m$ is odd. 	
	\end{thm}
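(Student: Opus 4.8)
The plan is to follow exactly the strategy of the proof of Theorem~\ref{1}: I will show that $f$ is injective on $\mathbb{F}_{q^3}$ precisely when $m$ is odd, by studying, for $a\in\mathbb{F}_{q^3}^*$, the equation $f(x+a)=f(x)$. Setting $y=x^q$, $z=x^{q^2}$, $b=a^q$, $c=a^{q^2}$ and reducing exponents modulo $q^3-1$, one has $x^{4q^2+1}=xz^4$, $x^{q+4}=x^4y$, $x^{q^2+4}=x^4z$ and $x^{4q+1}=xy^4$, so $f$ acquires the trivariate shape $f(x)=x+x(y^4+z^4)+x^4(y+z)$. First I would expand $f(x+a)=f(x)$ and apply the Frobenius map twice to obtain a system of three equations in $x,y,z$ with parameters $a,b,c$; as in Theorem~\ref{1}, adding the three equations makes almost every term cancel in characteristic two and leaves $a+b+c=0$.

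Next, substituting $c=a+b$ (so that $b+c=a$ and $b^4+c^4=a^4$) into the first equation and dividing by $a\neq 0$, I expect the whole system to collapse to the single identity
\begin{equation*}
x^4+y^4+z^4+a^3(x+y+z)+1=0,
\end{equation*}
that is, $T^4+a^3T+1=0$ with $T={\rm Tr}_{q^3/q}(x)\in\mathbb{F}_q$, using $x^4+y^4+z^4=\big({\rm Tr}_{q^3/q}(x)\big)^4=T^4$ and $x+y+z=T$. If $T=0$ this reads $1=0$, which is impossible; hence $T\neq 0$ and necessarily $a^3=(T^4+1)/T\in\mathbb{F}_q$.

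For the \emph{if} part ($m$ odd), the key observation is that $\gcd(3,q^3-1)=1$, so cubing is a bijection of $\mathbb{F}_{q^3}$. Then $a^3\in\mathbb{F}_q$ forces $a\in\mathbb{F}_q$, whence $a+b+c={\rm Tr}_{q^3/q}(a)=3a=a=0$, contradicting $a\neq 0$. Thus $f(x+a)\neq f(x)$ for every $a\neq 0$, and $f$ is a permutation.

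The genuinely harder direction is the \emph{only if} part, where I must \emph{produce} a collision when $m$ is even. Here $3\mid q-1$, so $\mathbb{F}_q$ contains a primitive cube root of unity $\omega$, and $\omega$ is a $(q-1)$-th power in $\mathbb{F}_{q^3}^*$ (because $3\mid q^2+q+1$); hence there exists $a\in\mathbb{F}_{q^3}\setminus\mathbb{F}_q$ with $a^q=\omega a$, which automatically gives ${\rm Tr}_{q^3/q}(a)=a(1+\omega+\omega^2)=0$ and $a^3\in\mathbb{F}_q$. The remaining task is to choose such an $a$ so that $t^4+a^3t+1$ has a root $t\in\mathbb{F}_q$; any $x$ with ${\rm Tr}_{q^3/q}(x)=t$ then yields $f(x+a)=f(x)$ upon reversing the reduction. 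Writing $s=a^3$, the admissible values of $s$ are exactly the non-cubes of $\mathbb{F}_q^*$, and a root $t$ exists precisely when $s=(t^4+1)/t$ for some $t$. I expect the main obstacle to be this existence, which I would settle through the identity $(t^4+1)/t=(t+1)^4/t$: since $4\equiv 1\pmod 3$, the quantity $(t+1)^4/t$ is a cube if and only if $(t+1)/t=1+t^{-1}$ is a cube, and as $t$ ranges over $\mathbb{F}_q^*\setminus\{1\}$ the value $1+t^{-1}$ ranges over all of $\mathbb{F}_q\setminus\{0,1\}$. Because not every element of $\mathbb{F}_q^*$ is a cube when $3\mid q-1$, some such value is a non-cube, producing an admissible $s$ and hence the desired collision. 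This completes the characterization.
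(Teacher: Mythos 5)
Your proof is correct. The forward direction ($m$ odd) is essentially the paper's own argument: the same trivariate reduction gives $a+b+c=0$ and $T^4+a^3T=1$ with $T=x+y+z\in\mathbb{F}_q$, and the contradiction comes from injectivity of cubing on $\mathbb{F}_{q^3}$ (the paper phrases this as $a^3=b^3=c^3\Rightarrow a=b=c$, you as $a^3\in\mathbb{F}_q\Rightarrow a\in\mathbb{F}_q$; these are the same fact).

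Your converse, however, takes a genuinely different route from the paper, and it works. The paper starts from a non-cube $\alpha\in\mathbb{F}_q^*$, invokes the Leonard--Williams factorization (Lemma \ref{four}) to obtain the unique root $\gamma_0\in\mathbb{F}_q$ of $x^4+\alpha x+1$ together with a root $\gamma_1\in\mathbb{F}_{q^3}\setminus\mathbb{F}_q$, and then verifies $f(\gamma_1)=\gamma_0=f(\gamma_0)$ by a direct expansion. You instead exploit that the reduction is reversible: for ${\rm Tr}_{q^3/q}(a)=0$ one has the identity $f(x+a)+f(x)=a\left(T^4+a^3T+1\right)$ (after $c=a+b$ the cross terms $a^4b+a^4c$ and $ab^4+ac^4$ both equal $a^5$ and cancel --- worth stating explicitly rather than ``expecting''), so a collision exists iff some admissible $s=a^3$ is of the form $(t^4+1)/t$ with $t\in\mathbb{F}_q$. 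Your identification of the admissible $s$ with the non-cubes of $\mathbb{F}_q^*$ is right in both directions, but the direction you actually need --- every non-cube $s\in\mathbb{F}_q^*$ is $a^3$ for some trace-zero $a$ --- deserves a line: since $3\mid q^2+q+1$, $(q-1)$ divides $(q^3-1)/3$, so every element of $\mathbb{F}_q^*$ is a cube in $\mathbb{F}_{q^3}^*$, and a cube root $a$ of a non-cube cannot lie in $\mathbb{F}_q$, forcing $a^{q-1}\in\{\omega,\omega^2\}$ and hence ${\rm Tr}_{q^3/q}(a)=0$. The cube-class computation $(t^4+1)/t=(t+1)^3\cdot\frac{t+1}{t}$ together with surjectivity of $t\mapsto 1+t^{-1}$ onto $\mathbb{F}_q\setminus\{0,1\}$ then produces the required non-cube. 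What your approach buys is independence from Lemma \ref{four} and from the explicit evaluation of $f(\gamma_1)$; what the paper's buys is an entirely explicit colliding pair $(\gamma_0,\gamma_1)$ without any counting of cube classes.
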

	\begin{proof}
    It suffices to show that $f(x+a)=f(x)$ has no solution in $\mathbb{F}_{q^3}$  
   for any  $ a\in\mathbb{F}^*_{q^3}$.	 
    Let $y=x^q,z=x^{q^2},b=a^q,c=a^{q^2}$. Then we have the following equations of system, 
    \begin{align*}
    \begin{split}
    \left \{
    \begin{array}{ll}
    (x+a)+(x+a)(z+c)^4+(x+a)^4(y+b)+(x+a)^4(z+c)+(x+a)(y+b)^4=x+xz^4+x^4y+x^4z+xy^4,                   \\
    (y+b)+(y+b)(x+a)^4+(y+b)^4(z+c)+(y+b)^4(x+a)+(y+b)(z+c)^4=y+yx^4+y^4z+y^4x+yz^4,                  \\
    (z+c)+(z+c)(y+b)^4+(z+c)^4(x+a)+(z+c)^4(y+b)+(z+c)(x+a)^4=z+zy^4+z^4x+z^4y+zx^4,                  \\
    \end{array}
    \right.
    \end{split}
    \end{align*}
    which can be reduced to 
   	\begin{align*}
   \begin{split}
   \left \{
   \begin{array}{ll}
   (b+c)x^4+(b^4+c^4)x+ay^4+a^4y+az^4+a^4z+a^4b+a^4c+ab^4+ac^4+a=0,                   \\
   (c+a)y^4+(c^4+a^4)y+bz^4+b^4z+bx^4+b^4x+b^4c+b^4a+bc^4+ba^4+b=0,                  \\
   (a+b)z^4+(a^4+b^4)z+cx^4+c^4x+cy^4+c^4y+c^4a+c^4b+ca^4+cb^4+c=0.                  \\
   \end{array}
   \right.
   \end{split}
   \end{align*}	
Adding the  three equations above, we get	$a+b+c=0.$ Plugging $c=a+b$ into the first two equations of the above system, we have 
	\begin{align*}
\begin{split}
\left \{
\begin{array}{ll}
(x+y+z)^4+a^3(x+y+z)=1,                   \\
(x+y+z)^4+b^3(x+y+z)=1.                  \\
\end{array}
\right.
\end{split}
\end{align*}
Therefore,     $a^3=b^3=c^3$. Noting that $m$ is odd, we get $a=b=c,$ which contradicts the facts $a+b+c=0$ and $a\neq 0$.

Conversely, if $m$ is even, then $g(x)=x^3$ is not a permutation polynomial over $\mathbb{F}_{q}$. There exists an element $\alpha\in\mathbb{F}^*_{q}$ such that $x^3+\alpha=0$ has  no solutions in $\mathbb{F}_{q}$. By Lemma \ref{four}, we have that 
$x^4+\alpha x+1=0$ has only one solution in $\mathbb{F}_{q}$, denoted as $\gamma_0$.
Then we have $\alpha=\frac{\gamma_0^4+1}{\gamma_0}$.

Assume that $\omega$ is a primitive 	element of  ${\mathbb F}_{q^3}$, and $\alpha=\omega^{(q^2+q+1)t}$    for some positive integer $t$. Let $\beta=\omega^{\frac{(q^2+q+1)}{3}t}$ and $\eta=\omega^{\frac{q^3-1}{3}}$.
Then we have 
$x^3+\alpha=(x+\beta)(x+\eta\beta)(x+\eta^2\beta)$. By  Lemma \ref{four}, we have that 
$x^4+\alpha x+1=0$ has four solutions in $\mathbb{F}_{q^3}$. Assume that $\gamma_1\in\mathbb{F}_{q^3}\backslash\mathbb{F}_{q}$  is a solution  of $x^4+\alpha x+1=0$, which implies that $\gamma_1^4+\alpha\gamma_1+1=\gamma_1^4+\frac{\gamma_0^4+1}{\gamma_0}\gamma_1+1=0$. 
It is easy to check that $\gamma_1+\gamma_1^q+\gamma_1^{q^2}\in\mathbb{F}_{q}$ is a solution of $x^4+\alpha x+1=0$. Therefore, $\gamma_1+\gamma_1^q+\gamma_1^{q^2}=\gamma_0$.
Then we have the following equations,
\begin{equation*}
\begin{split}
f(\gamma_1)&=\gamma_1+ \gamma_1^{4q^2+1}+\gamma_1^{q+4}+\gamma_1^{q^2+4}+\gamma_1^{4q+1}\\
&=\gamma_1+\gamma_1(\gamma_1^q+\gamma_1)^{4q}+\gamma_1^4(\gamma_1^q+\gamma_1)^q\\
&=\gamma_1+\gamma_1(\gamma_1^{q^2}+\gamma_0)^{4q}+\gamma_1^4(\gamma_1^{q^2}+\gamma_0)^q\\
&=\gamma_1+\gamma_1\gamma_0^4+\gamma_1^4\gamma_0\\
&=\gamma_0(\gamma_1^4+\frac{\gamma_0^4+1}{\gamma_0}\gamma_1)\\
&=\gamma_0=f(\gamma_0),
\end{split}
\end{equation*}
 a contradiction.
 We complete the proof.
\end{proof}	

\begin{thm}
 Let    $m$ be an  integer with $  q=2^m$. Then
		\begin{equation*}
	f(x)=  x + x^{q^2+2}+x^{2q+1}+x^{3}+x^{3q}
	\end{equation*}
is a permutation polynomial over $\mathbb{F}_{q^3}$ if and only if $m$ is odd. 	
\end{thm}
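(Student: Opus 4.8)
The plan is to reuse the strategy of the previous two theorems and to show that $f(x+a)=f(x)$ is unsolvable in $\mathbb{F}_{q^3}$ for every $a\in\mathbb{F}_{q^3}^*$. Writing $y=x^q$, $z=x^{q^2}$, $b=a^q$, $c=a^{q^2}$, I would first record the convenient regrouping $f(x)=x+x^{2}(x+x^{q^2})+x^{2q}(x+x^{q})$, i.e. $f=x+x^2(x+z)+y^2(x+y)$ in trivariate form; this both streamlines the expansion of $f(x+a)-f(x)$ and makes transparent that $f$ fixes $\mathbb{F}_{q}$ pointwise (for $x\in\mathbb{F}_q$ one has $x+x^q=x+x^{q^2}=0$, so $f(x)=x$). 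Expanding $f(x+a)-f(x)=0$ together with its two Frobenius conjugates gives a system of three equations that are linear and quadratic in $x,y,z$; adding them and collecting terms in characteristic two should collapse everything to $a+b+c=0$. I then substitute $c=a+b$ into the first two equations to obtain reduced equations $E_1$ and $E_2$ (the third being $E_1+E_2$).

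The decisive and least routine step is the handling of $E_1,E_2$. Unlike Theorem \ref{1}, where the degree-$2$ shape produced symmetric coefficients, and unlike the preceding theorem, where the linearized degree-$4$ terms collapsed onto $x+y+z$, here the degree-$3$ terms leave an unsymmetric system from which no linear combination can isolate the linear or the quadratic part. The trick I would use is to form precisely $b\,E_1+a\,E_2$: its quadratic part is $b^2x^2+(a^2+b^2)y^2+a^2z^2=\big(bx+(a+b)y+az\big)^2$, while every surviving linear coefficient and the constant term carry the common factor $V:=a^2+ab+b^2$ (the linear part equals $V\cdot\big(bx+(a+b)y+az\big)$ and the constant equals $V^2$). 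Hence, setting $W:=bx+(a+b)y+az\in\mathbb{F}_{q^3}$, the combination becomes the single quadratic $W^2+VW+V^2=0$. Discovering this exact multiplier $(b,a)$, so that the square completes and the factor $V$ appears throughout, is the main obstacle; everything afterward is immediate.

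For the forward implication ($m$ odd) I would apply Lemma \ref{two} over $\mathbb{F}_{q^3}$. Since the absolute trace satisfies $\mathrm{Tr}_{q^3}(1)=3m\equiv m\equiv1\pmod2$, the polynomial $t^2+t+1$ has no root in $\mathbb{F}_{q^3}$; as $b=a^q\neq0$, this forces $V=a^2+ab+b^2\neq0$. Applying Lemma \ref{two} once more to $W^2+VW+V^2=0$, a solution would require $\mathrm{Tr}_{q^3}(V^2/V^2)=\mathrm{Tr}_{q^3}(1)=0$, which is false. This contradiction shows that no collision exists, so $f$ is a permutation.

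For the converse ($m$ even) I would exhibit an explicit non-injectivity. Now $\mathbb{F}_4\subseteq\mathbb{F}_q$; fix a primitive cube root of unity $\omega$. From $q\equiv1\pmod3$ we get $3\mid q^2+q+1=|U|$, where $U=\{a^{q-1}:a\in\mathbb{F}_{q^3}^*\}$, so $\omega\in U$ and there is $a\in\mathbb{F}_{q^3}^*$ with $a^q=\omega a$; then $b=\omega a$, $c=\omega^2a$, $a+b+c=0$ and $a\notin\mathbb{F}_q$. Using the regrouped form, a direct evaluation gives $f(1+a)=1+a(1+\omega+\omega^2)+a^3(\omega+\omega)=1=f(1)$, while $1+a\neq1$, so $f$ is not a permutation. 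I expect the only point needing care in this direction to be the existence of $a$ with $a^q=\omega a$, which is exactly where the condition $3\mid q^2+q+1$, i.e. the parity of $m$, enters.
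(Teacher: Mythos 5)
Your proof is correct, and for the forward implication it follows a genuinely different and much lighter path than the paper's. The paper proves that $f(x)=a$ has at most one root: after the substitution $z=x+y+A$ it forms the resultant $R(f_1,f_2,y)=Rx^3+Sx^2+Tx+U$ with Magma, treats $R=0$ (i.e.\ $a\in\mathbb{F}_q$) separately, and for $R\neq 0$ applies the cubic-factorization criterion of Lemma \ref{three}, which forces a trace computation resting on very long Magma-verified identities for $V_1$, $V_2$ and an auxiliary polynomial $Z$. You instead rule out collisions $f(x+a)=f(x)$. I verified your key identity: with $c=a+b$ the reduced first equation is $E_1:\; bx^2+(a+b)y^2+(a^2+b^2)x+b^2y+a^2z+(a+a^2b+ab^2+b^3)=0$ together with its conjugate $E_2$, and the combination $bE_1+aE_2$ does collapse exactly as you claim to $W^2+VW+V^2=0$ with $W=bx+(a+b)y+az$ and $V=a^2+ab+b^2$ (square part $b^2x^2+(a+b)^2y^2+a^2z^2$, linear part $V\cdot W$, constant $a^4+a^2b^2+b^4=V^2$). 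For $m$ odd, ${\rm Tr}_{q^3}(1)=1$ simultaneously gives $V\neq 0$ and, by Lemma \ref{two}, the unsolvability of the quadratic in $W$, so no case analysis, no resultant and no machine computation are needed; the only nontrivial input is the multiplier pair $(b,a)$, and the resulting argument is in the same spirit as the paper's Theorem \ref{1}. Your converse is essentially the paper's own counterexample (an $a$ with $a^{q-1}=\omega$ a primitive cube root of unity exists iff $3\mid q^2+q+1$, i.e.\ $m$ even, and then $f(1+a)=f(1)=1$); your intermediate expression for $f(1+a)$ is slightly garbled (the vanishing coefficients of $a^2$ and $a^3$ are $1+\omega^2+1+\omega^2$ and $\omega^2+\omega^2+1+1$, respectively), but each coefficient does vanish, so the conclusion stands.
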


\begin{proof}
	For each fixed  $a\in \mathbb{F}_{q^3}$, it suffices to prove that  the following equation
	\begin{equation*}
     x + x^{q^2+2}+x^{2q+1}+x^{3}+x^{3q}=a
	\end{equation*} 	
	has at most one solution in  $\mathbb{F}_{q^3}$.

	 Let $y=x^q,z=y^q,b=a^q,c=b^q$ and $A=a+b+c.$  Then we have the following equations of system, 
	\begin{align}\label{1.1}
	\begin{split}
	\left \{
	\begin{array}{ll}
	x+x^2z+xy^2+x^3+y^3=a,                   \\
	y+y^2x+yz^2+y^3+z^3=b,                 \\
	z+z^2y+zx^2+z^3+x^3=c.                  \\
	\end{array}
	\right.
	\end{split}
	\end{align}
	Adding the above three equations, we get $x+y+z=a+b+c=A$. Plugging  $z=x+y+A$ into the first two equations of System  (\ref{1.1}), we can obtain
	\begin{equation}\label{1.2}
	f_1: x+x^2y+Ax^2+xy^2+y^3+a=0,
	\end{equation} 		
	\begin{equation}\label{1.3}
	f_2: x^3+Ax^2+A^2x+y^3+Ay^2+y+A^3+b=0.
	\end{equation} 		
	By   Magma, the resultant of $f_1$ and $f_2$
	with respect to $y$ is
	\begin{equation*}
	\begin{split}
	&R(f_1,f_2,y)=(a^2 +ab + aA + b^2 + bA + A^2) x^3+\\
	& (a^2A + abA +aA^2 +b^2A +
	bA^4 +bA^2 +b +A^5 +A^3 +A)x^2+\\
	 & (a^2 + ab + aA + b^2A^2 + bA + A^8 + 
	A^4 + 1)x + \\
	&a^3 + a^2b + a^2A + ab^2 + abA + aA^6 + aA^4 + a + b^3 + b^2A^3 + bA^6 + A^9.
	\end{split}
	\end{equation*}
	Since $f_1$ and $f_2$ have a common root,
	we have
	$R(f_1,f_2,y)=0.$
	Let 
	\begin{equation*}
	\begin{split}
		&R=a^2 +ab + aA + b^2 + bA + A^2=a^2+b^2+c^2+ab+ac+bc,\\
		& S=a^2A + abA +aA^2 +b^2A +
		bA^4 +bA^2 +b +A^5 +A^3 +A, \\
		& T=a^2 + ab + aA + b^2A^2 + bA + A^8 + 
		A^4 + 1, \\
		&U=a^3 + a^2b + a^2A + ab^2 + abA + aA^6 + aA^4 + a + b^3 + b^2A^3 + bA^6 + A^9.
	\end{split}
\end{equation*}
	Then 	\begin{equation}\label{1.21}
	R(f_1,f_2,y)=Rx^3+Sx^2+Tx+U=0.
	\end{equation} 	
	
	 We claim that $R=0$ if and only if $a\in \mathbb{F}_{q}$. The sufficiency is obvious.
	 Assume that 
 	 $R=a^2 +ab + aA + b^2 + bA + A^2=a^2+b^2+c^2+ab+ac+bc=0$, then $(a+b)^2+(a+c)^2+(a+b)(a+c)=0$. If $a\neq b$,  we get
		\begin{equation*} 
  \Big(\frac{a+c}{a+b}\Big)^2+\frac{a+c}{a+b}+1=0.
	\end{equation*} 	
	Note that $m$ is odd.   Applying ${\rm Tr}_{q^3}(\cdot)$ on both sides of the above equation, we get  
	${\rm Tr}_{q^3}(1)=3m=1\neq 0$, which is a contradiction. Then we have $a=b$, which implies   $a\in \mathbb{F}_{q}$. 
	
	The following proof is divided into two cases.
	
	\textbf{Case 1.} $R=0$.
	
	In this case, we have $a=b=A$.
	Then from Eq. (\ref{1.21}), we have $R(f_1,f_2,y)=(a^8+1)x+a^9+a=0$. If $a\neq 1$, we get $x=a.$
     If $a=1$,	
	 then Eqs. (\ref{1.2}) and (\ref{1.3}) becomes
	\begin{equation*}
	f_1: x+x^2y+x^2+xy^2+y^3+1=0,
	\end{equation*} 		
	\begin{equation*}
	f_2: x^3+x^2+x+y^3+y^2+y=0.
	\end{equation*} 		
	Let $y=\tau x$. Then the above two equations become 
	\begin{equation}\label{1.7}
	f_1: \tau(\tau^2+\tau+1)x^3+x^2+x+1=0,
	\end{equation} 		
	\begin{equation}\label{1.8}
	f_2: x((\tau^3+1)x^2+(\tau^2+1)x+\tau+1)=0.
	\end{equation} 	
	It is clear that $x\neq 0$.
	If $\tau=1$, then we have $x=a=1$ from Eq.  (\ref{1.7}). If $\tau\neq 1$,
	we have $\tau^3+1 \neq 0$, since $m$ is odd.
	From Eq. (\ref{1.8}), we have 
	\begin{equation}\label{1.9}
	 x^2+\frac{\tau+1}{\tau^2+\tau+1}x+\frac{1}{\tau^2+\tau+1}=0.
	\end{equation} 	
	Since $${\rm Tr}_{q^3}\Big(\frac{1}{\tau^2+\tau+1}\Big(\frac{\tau^2+\tau+1}{\tau+1}\Big)^2\Big)={\rm Tr}_{q^3}\Big(\frac{\tau^2+\tau+1}{\tau^2+1}\Big)={\rm Tr}_{q^3}\Big(1+\frac{1}{\tau+1}+\frac{1}{\tau^2+1}\Big)={\rm Tr}_{q^3}(1)=3m=1,$$
	we have that Eq.  (\ref{1.9}) has no solutions in $\mathbb{F}_{q^3}$ by Lemma \ref{two}.

	 	\textbf{Case 2.} $R\neq 0$.
	
	From Eq. (\ref{1.21}),  we have $x^3+R^{-1}Sx^2+R^{-1}Tx+R^{-1}U=0$. Let $x=X+R^{-1}S$.
	Then the above equation becomes
	\begin{equation}\label{1.10}
	X^3+R^{-2}(RT+S^2)X+R^{-2}(RU+ST)=0.
	\end{equation} 

\textbf{Subcase 2-1.} $RU+ST= 0$. 

	 In this case, we claim that  $RT+S^2=0$.
	Let 
	$h_1=a^6 + a^4 b^2 + a^4 c^2 + a^4 + a^2 b^4 + a^2 c^4 + a b + a c + b^6 + b^4 c^2 + b^4 + b^2 c^4 + b c + c^6 + c^4 +
	1,$ and
	$h_2=a^7 + a^6 c + a^5 b^2 + a^5 c^2 + a^5 + a^4 b^2 c + a^4 c^3 + a^4 c + a^3 b^4 + a^3 c^4 + a^2 b^4 c + a^2 b +
	a^2 c^5 + a^2 c + a b^6 + a b^4 c^2 + a b^4 + a b^2 c^4 + a c^6 + a c^4 + a c^2 + a + b^6 c + b^4 c^3 + b^4 c + 
	b^3 + b^2 c^5 + b^2 c + c^7 + c^5 + c^3 + c.$
	It is easy to verify that $RU+ST=h_1h_2$ and $RT+S^2=(b+c)(a+b)(a+b+c+1)^2h_1$.
	If $h_1=0$, then we  have $RT+S^2=0$. If $h_2=0$, noting that $c=b^{q},b=a^q$, we have
		\begin{equation*} 
	h_2+h_2^q+h_2^{q^2}=a^2b+a^2c+ab^2+ac^2+b^2c+bc^2=(a+b)(a+c)(b+c)=0,
	\end{equation*}
	which implies that $a=b=c$. Therefore,  $RT+S^2=(b+c)(a+b)(a+b+c+1)^2h_1=0$.
	In this case, $X=0$ from Eq. (\ref{1.10}). Furthermore, we have  $x=X+R^{-1}S=R^{-1}S$.
	
\textbf{Subcase 2-2.} $RU+ST\neq 0$. 	
	
	Considering Eq. (\ref{1.10}), in the following, we shall show that 
	$${\rm Tr}_{q^3}\Big(   \frac{  R^{-6}(RT+S^2)^3  }{  R^{-4}(RU+ST)^2   }      \Big)={\rm Tr}_{q^3}\Big(   \frac{  (RT+S^2)^3  }{  R^2(RU+ST)^2   }      \Big)={\rm Tr}_{q^3}\Big(   \frac{  (b+c)^3(a+b)^3(a+b+c+1)^6h_1  }{  R^2h_2^2   }      \Big)=0.$$
	By Lemma (\ref{two}), it is  equivalent to show that the equation
	$$Y^2+Y+\frac{  (b+c)^3(a+b)^3(a+b+c+1)^6h_1  }{  R^2h_2^2   } =0$$ has two solutions in
	 $ \mathbb{F}_{q^3}$.
	Let $Y=\frac{Z}{Rh_2}$. Then the above equation becomes 
	$$Z^2+Rh_2Z+(b+c)^3(a+b)^3(a+b+c+1)^6h_1=0.$$ 
	Let $V_1=(b+c)^3(a+b)^3(a+b+c+1)^6h_1, V_2=Rh_2$.
	With the help of Magma, we have 
	$V_1 =c^{15} a^{3} + c^{15} a^{2} b + c^{15} a b^{2} + c^{15} b^{3} + c^{14} a^{3} b + c^{14} a^{2} b^{2} + c^{14} a b^{3} + c^{14} b^{4} + c^{13} a^{3} b^{2} +
	c^{13} a^{2} b^{3} + c^{13} a b^{4} + c^{13} b^{5} + c^{12} a^{3} b^{3} + c^{12} a^{2} b^{4} + c^{12} a b^{5} + c^{12} b^{6} + c^{11} a^{7} + c^{11} a^{6} b +
	c^{11} a^{5} b^{2} + c^{11} a^{4} b^{3} + c^{11} a^{3} b^{4} + c^{11} a^{2} b^{5} + c^{11} a b^{6} + c^{11} b^{7} + c^{10} a^{7} b + c^{10} a^{6} b^{2} +
	c^{10} a^{5} b^{3} + c^{10} a^{4} b^{4} + c^{10} a^{4} + c^{10} a^{3} b^{5} + c^{10} a^{2} b^{6} + c^{10} a b^{7} + c^{10} b^{8} + c^{10} b^{4} +
	c^{9} a^{7} b^{2} + c^{9} a^{6} b^{3} + c^{9} a^{5} b^{4} + c^{9} a^{4} b^{5} + c^{9} a^{3} b^{6} + c^{9} a^{3} b^{2} + c^{9} a^{3} + c^{9} a^{2} b^{7} +
	c^{9} a^{2} b^{3} + c^{9} a^{2} b + c^{9} a b^{8} + c^{9} a b^{4} + c^{9} a b^{2} + c^{9} b^{9} + c^{9} b^{5} + c^{9} b^{3} + c^{8} a^{7} b^{3} +
	c^{8} a^{6} b^{4} + c^{8} a^{6} + c^{8} a^{5} b^{5} + c^{8} a^{4} b^{6} + c^{8} a^{4} b^{2} + c^{8} a^{4} + c^{8} a^{3} b^{7} + c^{8} a^{3} b^{3} + c^{8} a^{3} b + 
	c^{8} a^{2} b^{8} + c^{8} a^{2} b^{2} + c^{8} a b^{9} + c^{8} a b^{5} + c^{8} a b^{3} + c^{8} b^{10} + c^{7} a^{11} + c^{7} a^{10} b + c^{7} a^{9} b^{2} +
	c^{7} a^{8} b^{3} + c^{7} a^{5} b^{2} + c^{7} a^{5} + c^{7} a^{4} b^{3} + c^{7} a^{4} b + c^{7} a^{3} b^{8} + c^{7} a^{3} b^{4} + c^{7} a^{2} b^{9} +
	c^{7} a^{2} b^{5} + c^{7} a b^{10} + c^{7} a b^{4} + c^{7} b^{11} + c^{7} b^{5} + c^{6} a^{11} b + c^{6} a^{10} b^{2} + c^{6} a^{9} b^{3} + c^{6} a^{8} b^{4} + 
	c^{6} a^{8} + c^{6} a^{5} b^{3} + c^{6} a^{5} b + c^{6} a^{4} b^{2} + c^{6} a^{4} + c^{6} a^{3} b^{9} + c^{6} a^{3} b^{5} + c^{6} a^{2} b^{10} + c^{6} a^{2} b^{6} +
	c^{6} a b^{11} + c^{6} a b^{5} + c^{6} b^{12} + c^{6} b^{6} + c^{6} b^{4} + c^{5} a^{11} b^{2} + c^{5} a^{10} b^{3} + c^{5} a^{9} b^{4} + c^{5} a^{8} b^{5} +
	c^{5} a^{7} b^{2} + c^{5} a^{7} + c^{5} a^{6} b^{3} + c^{5} a^{6} b + c^{5} a^{3} b^{10} + c^{5} a^{3} b^{6} + c^{5} a^{3} b^{2} + c^{5} a^{3} + c^{5} a^{2} b^{11} 
	+ c^{5} a^{2} b^{7} + c^{5} a^{2} b^{3} + c^{5} a^{2} b + c^{5} a b^{12} + c^{5} a b^{6} + c^{5} a b^{4} + c^{5} a b^{2} + c^{5} b^{13} + c^{5} b^{7} +
	c^{5} b^{5} + c^{5} b^{3} + c^{4} a^{11} b^{3} + c^{4} a^{10} b^{4} + c^{4} a^{10} + c^{4} a^{9} b^{5} + c^{4} a^{8} b^{6} + c^{4} a^{8} b^{2} + c^{4} a^{8} +
	c^{4} a^{7} b^{3} + c^{4} a^{7} b + c^{4} a^{6} b^{2} + c^{4} a^{6} + c^{4} a^{4} b^{6} + c^{4} a^{4} b^{4} + c^{4} a^{4} b^{2} + c^{4} a^{4} + c^{4} a^{3} b^{11} +
	c^{4} a^{3} b^{7} + c^{4} a^{3} b^{3} + c^{4} a^{3} b + c^{4} a^{2} b^{12} + c^{4} a^{2} b^{8} + c^{4} a^{2} b^{2} + c^{4} a b^{13} + c^{4} a b^{7} +
	c^{4} a b^{5} + c^{4} a b^{3} + c^{4} b^{14} + c^{4} b^{8} + c^{3} a^{15} + c^{3} a^{14} b + c^{3} a^{13} b^{2} + c^{3} a^{12} b^{3} + c^{3} a^{11} b^{4} +
	c^{3} a^{10} b^{5} + c^{3} a^{9} b^{6} + c^{3} a^{9} b^{2} + c^{3} a^{9} + c^{3} a^{8} b^{7} + c^{3} a^{8} b^{3} + c^{3} a^{8} b + c^{3} a^{7} b^{8} +
	c^{3} a^{7} b^{4} + c^{3} a^{6} b^{9} + c^{3} a^{6} b^{5} + c^{3} a^{5} b^{10} + c^{3} a^{5} b^{6} + c^{3} a^{5} b^{2} + c^{3} a^{5} + c^{3} a^{4} b^{11} +
	c^{3} a^{4} b^{7} + c^{3} a^{4} b^{3} + c^{3} a^{4} b + c^{3} a^{3} b^{12} + c^{3} a^{3} b^{8} + c^{3} a^{3} b^{4} + c^{3} a^{3} + c^{3} a^{2} b^{13} +
	c^{3} a^{2} b^{9} + c^{3} a^{2} b^{5} + c^{3} a^{2} b + c^{3} a b^{14} + c^{3} a b^{8} + c^{3} a b^{4} + c^{3} a b^{2} + c^{3} b^{15} + c^{3} b^{9} +
	c^{3} b^{5} + c^{3} b^{3} + c^{2} a^{15} b + c^{2} a^{14} b^{2} + c^{2} a^{13} b^{3} + c^{2} a^{12} b^{4} + c^{2} a^{11} b^{5} + c^{2} a^{10} b^{6} +
	c^{2} a^{9} b^{7} + c^{2} a^{9} b^{3} + c^{2} a^{9} b + c^{2} a^{8} b^{8} + c^{2} a^{8} b^{2} + c^{2} a^{7} b^{9} + c^{2} a^{7} b^{5} + c^{2} a^{6} b^{10} +
	c^{2} a^{6} b^{6} + c^{2} a^{5} b^{11} + c^{2} a^{5} b^{7} + c^{2} a^{5} b^{3} + c^{2} a^{5} b + c^{2} a^{4} b^{12} + c^{2} a^{4} b^{8} + c^{2} a^{4} b^{2} +
	c^{2} a^{3} b^{13} + c^{2} a^{3} b^{9} + c^{2} a^{3} b^{5} + c^{2} a^{3} b + c^{2} a^{2} b^{14} + c^{2} a^{2} b^{10} + c^{2} a^{2} b^{6} + c^{2} a^{2} b^{2} +
	c^{2} a b^{15} + c^{2} a b^{9} + c^{2} a b^{5} + c^{2} a b^{3} + c^{2} b^{16} + c^{2} b^{12} + c^{2} b^{10} + c^{2} b^{8} + c^{2} b^{6} + c^{2} b^{4} +
	c a^{15} b^{2} + c a^{14} b^{3} + c a^{13} b^{4} + c a^{12} b^{5} + c a^{11} b^{6} + c a^{10} b^{7} + c a^{9} b^{8} + c a^{9} b^{4} + c a^{9} b^{2} +
	c a^{8} b^{9} + c a^{8} b^{5} + c a^{8} b^{3} + c a^{7} b^{10} + c a^{7} b^{4} + c a^{6} b^{11} + c a^{6} b^{5} + c a^{5} b^{12} + c a^{5} b^{6} +
	c a^{5} b^{4} + c a^{5} b^{2} + c a^{4} b^{13} + c a^{4} b^{7} + c a^{4} b^{5} + c a^{4} b^{3} + c a^{3} b^{14} + c a^{3} b^{8} + c a^{3} b^{4} +
	c a^{3} b^{2} + c a^{2} b^{15} + c a^{2} b^{9} + c a^{2} b^{5} + c a^{2} b^{3} + c a b^{16} + c a b^{12} + c a b^{8} + c a b^{4} + c b^{17} +
	c b^{13} + c b^{9} + c b^{5} + a^{15} b^{3} + a^{14} b^{4} + a^{13} b^{5} + a^{12} b^{6} + a^{11} b^{7} + a^{10} b^{8} + a^{10} b^{4} + a^{9} b^{9} +
	a^{9} b^{5} + a^{9} b^{3} + a^{8} b^{10} + a^{7} b^{11} + a^{7} b^{5} + a^{6} b^{12} + a^{6} b^{6} + a^{6} b^{4} + a^{5} b^{13} + a^{5} b^{7} + a^{5} b^{5} +
	a^{5} b^{3} + a^{4} b^{14} + a^{4} b^{8} + a^{3} b^{15} + a^{3} b^{9} + a^{3} b^{5} + a^{3} b^{3} + a^{2} b^{16} + a^{2} b^{12} + a^{2} b^{10} + a^{2} b^{8} +
	a^{2} b^{6} + a^{2} b^{4} + a b^{17} + a b^{13} + a b^{9} + a b^{5} + b^{18} + b^{12} + b^{8} + b^{6}$,
	and
	
	$V_2=c^ {9} + c^ {8} b + c^ {7} a^ {2} + c^ {7} + c^ {6} a^ {3} + c^ {6} a b^ {2} + c^ {6} b^ {3} + c^ {6} b + c^ {5} a^ {4} + c^ {5} a^ {2} b^ {2} + c^ {5} b^ {2} + c^ {5} + c^ {4} a^ {5} + 
	c^ {4} a^ {3} + c^ {4} a^ {2} b^ {3} + c^ {4} a^ {2} b + c^ {4} a b^ {4} + c^ {4} a b^ {2} + c^ {4} b^ {5} + c^ {4} b + c^ {3} a^ {6} + c^ {3} a^ {4} + c^ {3} a^ {2} b^ {4} +
	c^ {3} a^ {2} + c^ {3} b^ {4} + c^ {3} + c^ {2} a^ {7} + c^ {2} a^ {5} b^ {2} + c^ {2} a^ {4} b^ {3} + c^ {2} a^ {4} b + c^ {2} a^ {3} b^ {4} + c^ {2} a^ {2} b + c^ {2} a b^ {6} +
	c^ {2} b^ {7} + c^ {2} b^ {5} + c^ {2} b + c a^ {6} b^ {2} + c a^ {4} b^ {4} + c a^ {4} b^ {2} + c a^ {4} + c a^ {2} b^ {6} + c a^ {2} b^ {2} + c b^ {8} + c b^ {6} +
	c b^ {2} + a^ {9} + a^ {8} b + a^ {7} + a^ {6} b^ {3} + a^ {6} b + a^ {5} b^ {2} + a^ {4} b^ {5} + a^ {4} b + a^ {3} b^ {4} + a^ {3} b^ {2} + a^ {3} + a^ {2} b^ {7} +
	a^ {2} b^ {5} + a^ {2} b + a b^ {8} + a b^ {6} + a b^ {4} + a b^ {2} + b^ {5}.$
	
	Consider $V_1$, $V_2$ and  $Z$ as polynomials with respect to $c$.
	Write $C(V_1,i), C(V_2,i),  C(Z,i)$ the   coefficients of the $i$-degree term of $V_1$,   $V_2$ and $Z$, respectively.
	Then $C(V_2,9) C(Z,6)=C(V_1,15)$, which implies $C(Z,6)=(a+b)^3$.
	Furthermore, $C(V_2,9) C(Z,5)+C(V_2,8) C(Z,6)=C(V_1,14)$, then we have $C(Z,5)=0$.
	Similarly, we can get all the coefficients of $Z$, and $Z=(a^3 + a^2 b + a b^2 + b^3) c^6+(a^5 + a^4 b + a^3 + a^2 b + a b^4 + a b^2 + b^5 + b^3) c^4+(a^7 + a^6 b + a^5 b^2 + a^4 b^3 + a^3 b^4 + a^2 b^5 + a b^6 + b^7) c^2+
	(a^4+b^4) c+(a^9+b a^8+a^7+b a^6+b^2 a^5+(b^3+b) a^4+(b^4+b^2+1) a^3+(b^5+b^3+b) a^2+(b^8+b^6+b^4+b^2) a+b^9+b^7+b^3)$.
	Thus $${\rm Tr}_{q^3}\Big(   \frac{  R^{-6}(RT+S^2)^3  }{  R^{-4}(RU+ST)^2   }      \Big)=0\neq{\rm Tr}_{q^3}(1).$$ By Lemma \ref{three}, we have that Eq. (\ref{1.10}) has only one solution in $\mathbb{F}_{q^3}$, so does Eq. (\ref{1.21}).
	
	Therefore, for any fixed $a\in \mathbb{F}_{q^3}$, $f(x)=a$ has at most one solution in ${\mathbb F}_{q^3}$.

	Conversely, if $m$ is even, then we  assume that $\omega$ is a primitive 	element of  ${\mathbb F}_{q^3}$. Let $\eta=\omega^{\frac{(q^2+q+1)(q-1)}{3}}$ and $\alpha=\omega^{\frac{q^2+q+1}{3}}$. It is easy to check that $\eta^2+\eta+1=0$ and   $\alpha^{q}=\eta \alpha, \alpha^{q^2}=\eta^2 \alpha$.
	Therefore, 
	\begin{equation*}
	\begin{split}
	f(\alpha+1)&=(\alpha+1) + (\alpha+1)^{q^2+2}+(\alpha+1)^{2q+1}+(\alpha+1)^{3}+(\alpha+1)^{3q}\\
	&=\alpha+1+\eta^2\alpha+\eta\alpha\\
     &=1=f(1).
	\end{split}
	\end{equation*}
	Since $\alpha\neq 0$, we get that $f(x)$ is not an injective function.
	This completes the proof.
\end{proof}

\begin{thm}
	Let    $m$ be an  integer with $q=2^m$. Then
	\begin{equation*}
	f(x)=  x + x^{2q^2+2q}+x^{2q^2+2}+x^{4}+x^{4q}
	\end{equation*}
	is a permutation polynomial over $\mathbb{F}_{q^3}$ if and only if $m\not\equiv 1~({\rm mod}\ 3)$. 	 	
\end{thm}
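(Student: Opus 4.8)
The plan is to follow the multivariate approach used in the proof of Theorem~\ref{1} and the later theorems: for $m\not\equiv 1~({\rm mod}\ 3)$ I will show that $f(x+a)=f(x)$ has no solution $x\in\mathbb{F}_{q^3}$ whenever $a\in\mathbb{F}_{q^3}^*$, and for $m\equiv 1~({\rm mod}\ 3)$ I will exhibit an explicit collision. Writing $y=x^q$, $z=x^{q^2}$, $b=a^q$, $c=a^{q^2}$, one first checks that $f(x)=x+y^2z^2+x^2z^2+x^4+y^4$, so $f(x+a)=f(x)$ together with its two Frobenius conjugates yields a system of three equations in $x,y,z$ and $a,b,c$. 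Expanding each difference in characteristic two (using $(u+v)^2=u^2+v^2$ and $(u+v)^4=u^4+v^4$, so that the quartic terms contribute only the constants $a^4,b^4$) makes every equation involve $x,y,z$ only through $x^2,y^2,z^2$; adding the three equations cancels all the quadratic terms and leaves $a+b+c=0$, exactly as before.

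Substituting $c=a+b$ (hence $c^2=a^2+b^2$) I expect each of the three equations to collapse to a single linear relation: with $S=x^2+y^2+z^2$ they should become $c^2S=a$, $a^2S=b$ and $b^2S=c$. The structural key is that $S=x^2+x^{2q}+x^{2q^2}=\big({\rm Tr}_{q^3/q}(x)\big)^2\in\mathbb{F}_q$, so $S$ is fixed by the $q$-th power map. Put $w=cS$. From $a=c^2S$ and $b=c^4S^3$ the relation $a+b+c=0$ becomes $c\,(w^3+w+1)=0$, and since $c\neq 0$ (otherwise $a=c^2S=0$) we get $w^3+w+1=0$; thus $w$ is a root of an irreducible cubic over $\mathbb{F}_2$, lies in $\mathbb{F}_8\setminus\mathbb{F}_2$, and has multiplicative order $7$. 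On the other hand, using $c^q=a^{q^3}=a$ and $S\in\mathbb{F}_q$, one computes $w^q=c^qS=aS=c^2S^2=(cS)^2=w^2$. As $w$ has order $7$, the identity $w^q=w^2$ forces $q\equiv 2~({\rm mod}\ 7)$; since the order of $2$ modulo $7$ is $3$, this is equivalent to $m\equiv 1~({\rm mod}\ 3)$. Hence for $m\not\equiv 1~({\rm mod}\ 3)$ no such $w$ exists, the system is unsolvable, and $f$ permutes $\mathbb{F}_{q^3}$.

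For the converse, assume $m\equiv 1~({\rm mod}\ 3)$, so $q\equiv 2~({\rm mod}\ 7)$ and $\mathbb{F}_8\subseteq\mathbb{F}_{q^3}$. Fix a root $w\in\mathbb{F}_8\setminus\mathbb{F}_2$ of $x^3+x+1$ and any $S\in\mathbb{F}_q^*$, and set $c=w/S$, $a=w^2/S$, $b=w^4/S$. Using $q\equiv 2~({\rm mod}\ 7)$ I can verify $a^q=b$, $b^q=c$, $c^q=a$, so $a,b,c$ are genuine Frobenius conjugates of $a\in\mathbb{F}_{q^3}^*$; moreover $a+b+c=(w+w^2+w^4)/S=0$, since $w+w^2+w^4$ is the absolute trace of a root of $x^3+x+1$ and equals $0$. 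Choosing $x\in\mathbb{F}_{q^3}$ with ${\rm Tr}_{q^3/q}(x)=S^{1/2}$ (possible because the trace is surjective) makes $S=x^2+y^2+z^2$, whereupon the reduced equations $c^2S=a$, $a^2S=b$, $b^2S=c$ hold by construction. Consequently $f(x+a)=f(x)$ with $a\neq 0$, so $f$ is not injective.

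The routine parts are the expansions and the post-substitution simplifications, which mirror the computations in the earlier proofs. The heart of the argument — and the main obstacle — is noticing that $S\in\mathbb{F}_q$ and introducing $w=cS$, which converts the whole system into the two conditions $w^3+w+1=0$ and $w^q=w^2$; the interaction between the order of $w$ (namely $7$) and the order of $2$ modulo $7$ (namely $3$) is exactly what manufactures the congruence $m\equiv 1~({\rm mod}\ 3)$. Checking in the converse that the constructed $a,b,c$ are truly Frobenius conjugates — which is precisely where $q\equiv 2~({\rm mod}\ 7)$ is used — is the other step that requires care.
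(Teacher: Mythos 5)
Your argument is correct, and its skeleton is the paper's: the same trivariate setup, the same derivation of $a+b+c=0$, and the same reduction to $x^2+y^2+z^2=b/a^2=c/b^2=a/c^2$ (the three equations do collapse as you ``expect'': with $c^2=a^2+b^2$ the first becomes $c^2S+a+c^4+c^4=c^2S+a=0$, and similarly for the other two). Where you diverge is the endgame, and your route is arguably cleaner. The paper eliminates $S$ to get $a^3+a^2b+b^3=0$, concludes that $a/b$ is a root of $x^3+x^2+1$ lying in $\mathbb{F}_{2^3}$, and then splits into the cases $m\equiv 0$ and $m\equiv 2\pmod 3$, in each case computing how Frobenius acts on $\mathbb{F}_{2^3}$ to reach a contradiction. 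Your substitution $w=cS$ packages both cases into the single pair of conditions $w^3+w+1=0$ and $w^q=w^2$, so the obstruction $q\equiv 2\pmod 7$ (equivalently $m\equiv 1\pmod 3$) falls out of the order of $w$ in one line; note that your $w$ and the paper's $a/b$ are tied by $a/b=1/(c^2S^2)=w^{-2}$, so both arguments exploit the same underlying $\mathbb{F}_8$-structure. For the converse the paper simply verifies $f(\alpha)=f(1)$ for $\alpha$ a root of $x^3+x^2+1$, which is a shorter check, whereas you reverse-engineer a collision from the parametrization $(a,b,c)=(w^2/S,\,w^4/S,\,w/S)$ together with a choice of $x$ of prescribed trace; both are valid, and yours has the small advantage of exhibiting the full family of colliding pairs rather than a single instance.
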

\begin{proof}
In the following, we shall show that, for any  $ a\in\mathbb{F}^*_{q^3}$,  the equations $f(x+a)=f(x)$ 	 has no solution in $\mathbb{F}_{q^3}$. Assume, on the
contrary, that there exist $x\in\mathbb{F}_{q^3}$ and  $ a\in\mathbb{F}^*_{q^3}$ such that $f(x+a)=f(x)$. Let $y=x^q,z=x^{q^2},b=a^q,c=a^{q^2}$. Then we have  the following equations  of system
\begin{align*}
\begin{split}
\left \{
\begin{array}{ll}
(x+a)+(y+b)^2(z+c)^2+(x+a)^2(z+c)^2+(x+a)^4+(y+b)^4=x+y^2z^2+x^2z^2+x^4+y^4,                   \\
(y+b)+(z+c)^2(x+a)^2+(y+b)^2(x+a)^2+(y+b)^2+(z+c)^2=y+z^2x^2+y^2x^2+y^4+z^4,                  \\
(z+c)+(x+a)^2(y+b)^2+(z+c)^2(y+b)^2+(z+c)^2+(x+a)^2=z+x^2y^2+z^2y^2+z^4+x^4,                  \\
\end{array}
\right.
\end{split}
\end{align*}
which can be reduced to  
\begin{align*}
\begin{split}
\left \{
\begin{array}{ll}
c^2x^2+c^2y^2+(a^2+b^2)z^2+a+a^2c^2+b^2c^2+a^4+b^4=0,                   \\
a^2y^2+a^2z^2+(b^2+c^2)x^2+b+b^2a^2+c^2a^2+b^4+c^4=0,                  \\
b^2z^2+b^2x^2+(c^2+a^2)y^2+c+c^2b^2+a^2b^2+c^4+a^4=0.                  \\
\end{array}
\right.
\end{split}
\end{align*}	
Adding the  three equations above, we get	$a+b+c=0.$ Plugging it into the three equations of the above system, we have 
	\begin{equation}\label{4.1}
x^2+y^2+z^2=\frac{b}{a^2}=\frac{c}{b^2}=\frac{a}{c^2}.
\end{equation}
Then we have $a^3+a^2b+b^3=0$. Therefore, we have 
$\frac{a}{b}$ is a solution of $x^3+x^2+1=0$. It is easy to check that $\frac{a}{b}\in\mathbb{F}_{2^3}$.

  We divide the following proof  into two cases.

	\textbf{Case 1.} $m\equiv 2~({\rm mod}\ 3)$.
	
	In this case, since $\frac{a}{b}\in\mathbb{F}_{2^3}$,  we have 
		\begin{equation}\label{4.2}
(\frac{a}{b})^q=(\frac{a}{b})^4, (\frac{a}{b})^{q^2}=(\frac{a}{b})^2.
	\end{equation}
	From  Eq. (\ref{4.1}), we get
   \begin{equation}\label{4.3}
   \frac{a}{b}=(\frac{c}{a})^2=(\frac{a}{b})^{2q^2}.
   \end{equation}
Combining Eqs.  (\ref{4.2}) and    (\ref{4.3}), we have $(\frac{a}{b})^{3}=1$, which contradicts the conclution that $\frac{a}{b}$ is a solution of $x^3+x^2+1=0$.

	\textbf{Case 2.} $m\equiv 0~({\rm mod}\ 3)$.

	In this case, since $\frac{a}{b}\in\mathbb{F}_{2^3}$,  we have 
\begin{equation}\label{4.4}
(\frac{a}{b})^q=\frac{a}{b}.
\end{equation}
Combining Eqs.  (\ref{4.3}) and    (\ref{4.4}), we have $(\frac{a}{b})^{2}=\frac{a}{b}$, which contradicts the facts that $a\neq0$ and $a+b+c=0$.

	Conversely, if $m\equiv 1~({\rm mod}\ 3)$, then $x^3+x^2+1=0$ has three solutions in 
   $\mathbb{F}_{q^3}$ by Lemma \ref{three}. Assume that $\alpha$ is a solution of $x^3+x^2+1=0$ in $\mathbb{F}_{q^3}$. It is clear that $\alpha\in\mathbb{F}_{2^3}\backslash \{1\}$ and $\alpha^q=\alpha^2, \alpha^{q^2}=\alpha^4.$
   Therefore, we have
   \begin{equation*}
   \begin{split}
   f(\alpha)+f(1)&=\alpha+ \alpha^{2q^2+2q}+\alpha^{2q^2+2}+\alpha^{4}+\alpha^{4q}+1\\
   &=\alpha+\alpha^{12}+\alpha^3+\alpha^4+\alpha^8+1\\
   &=\alpha^3+\alpha^4+\alpha^5+1\\
   &=(1+\alpha)^2(1+\alpha^2+\alpha^3)=0,
   \end{split}
   \end{equation*}
 which means that $f(\alpha)=f(1)$. We get a contradiction.
The proof is complete.
\end{proof}

Similarly, we have the following theorem.

\begin{thm}
	Let    $m$ be an  integer with $q=2^m$. Then
	\begin{equation*}
	f(x)=  x^2 + x+x^q+x^{q^2+q}+x^{q^2+1}
	\end{equation*}
	is a permutation polynomial over $\mathbb{F}_{q^3}$ if and only if $m\not\equiv 1~({\rm mod}\ 3)$. 	 	
\end{thm}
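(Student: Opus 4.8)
The plan is to argue exactly as in the proof of the preceding theorem (which is why the author writes ``Similarly''): reduce the whole question to a single cubic relation between $a$ and $a^{q}$, and then read off the admissible residues of $m$ modulo $3$ from the multiplicative order of a root of $x^{3}+x^{2}+1$ in $\mathbb{F}_{2^{3}}$. Since every monomial of $f$ has algebraic degree at most $2$, the difference $f(x+a)+f(x)$ is affine in the Frobenius coordinates, so no resultant is needed here (in contrast to Theorem~3); the reduction is purely elementary.

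For the direct implication (that $f$ is a permutation when $m\not\equiv 1\ (\mathrm{mod}\ 3)$), I would show that $f(x+a)=f(x)$ has no solution for any $a\in\mathbb{F}_{q^{3}}^{*}$. Writing $y=x^{q}$, $z=x^{q^{2}}$, $b=a^{q}$, $c=a^{q^{2}}$ and expanding, the first equation reads
\begin{equation*}
cx+cy+(a+b)z+a^{2}+a+b+cb+ca=0,
\end{equation*}
and applying the Frobenius $x\mapsto x^{q}$ twice gives the two cyclic companions. Summing the three equations cancels all of $x,y,z$ and collapses the constant part to $(a+b+c)^{2}$, whence $a+b+c=0$. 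Substituting $c=a+b$ turns the system into three linear conditions on $S=x+y+z$, namely $(a+b)S=a+b+b^{2}$, $aS=a^{2}+a+b^{2}$, $bS=a^{2}+b$, of which the first is the sum of the other two. Compatibility of the two independent conditions is precisely
\begin{equation*}
a^{3}+a^{2}b+b^{3}=0 .
\end{equation*}

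The finish is number-theoretic. Dividing by $b^{3}$ shows $t:=a/b$ is a root of $x^{3}+x^{2}+1$, so $t\in\mathbb{F}_{2^{3}}^{*}$ has order $7$. Using $c=a+b$ one computes, inside $\mathbb{F}_{2^{3}}$, that $t^{q}=b/c=(t+1)^{-1}=t^{2}$ (equivalently $a/b=(c/a)^{2}$, the analogue of the cyclic identity in the previous proof). On the other hand $t\in\mathbb{F}_{2^{3}}$ forces $t^{q}=t^{2^{m}}$, so $t^{2^{m}}=t^{2}$, which together with $\mathrm{ord}(t)=7$ requires $2^{m}\equiv 2\ (\mathrm{mod}\ 7)$, i.e. $m\equiv 1\ (\mathrm{mod}\ 3)$; splitting into the cases $m\equiv 2$ and $m\equiv 0$ yields $t^{4}=t^{2}$ and $t=t^{2}$ respectively, both contradicting that $t$ has order $7$. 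Hence no solution exists when $m\not\equiv 1\ (\mathrm{mod}\ 3)$.

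For the converse I would exhibit an explicit collision when $m\equiv 1\ (\mathrm{mod}\ 3)$. Since $\mathbb{F}_{2^{3}}\subseteq\mathbb{F}_{q^{3}}$, pick $\alpha\in\mathbb{F}_{2^{3}}$ a root of $x^{3}+x+1$ (the reciprocal cubic, i.e. a nonzero trace-zero element of $\mathbb{F}_{2^{3}}$); then $\alpha\neq 0$ and $\alpha^{q}=\alpha^{2}$, $\alpha^{q^{2}}=\alpha^{4}$. A direct evaluation gives
\begin{equation*}
f(\alpha)=\alpha^{2}+\alpha+\alpha^{2}+\alpha^{6}+\alpha^{5}=\alpha+\alpha^{5}+\alpha^{6}=0=f(0),
\end{equation*}
so $f$ is not injective. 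I expect the only genuine obstacle to be bookkeeping: checking the exact constant terms after substituting $c=a+b$, and verifying that compatibility of the linear system in $S$ is genuinely equivalent to $a^{3}+a^{2}b+b^{3}=0$, so that no admissible $a$ is overlooked. Everything past the reduction is the same $\mathbb{F}_{2^{3}}$-order computation as in the preceding theorem.
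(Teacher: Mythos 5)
Your proposal is correct, and it is exactly the argument the paper intends when it writes ``Similarly'': the same trivariate reduction $f(x+a)+f(x)=0 \Rightarrow a+b+c=0 \Rightarrow a^3+a^2b+b^3=0$, the same order-$7$ computation with $t=a/b\in\mathbb{F}_{2^3}$ forcing $t^q=t^2$ and hence $m\equiv 1\pmod 3$, and an explicit $\mathbb{F}_{2^3}$-collision for the converse. I checked the linear system in $S=x+y+z$, the compatibility condition, and the evaluation $f(\alpha)=f(0)$; all are correct.
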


\begin{thm}
	Let    $m$ be an  integer with $q=2^m$. Then
	\begin{equation*}
	f(x)=  x^2 + x^{2q^2+2q-2}+x^{2q^3+2q^2-2q}+x^{q^2-q+1}+x^{q^3-q^2+q}
	\end{equation*}
	is a permutation polynomial over $\mathbb{F}_{q^3}$ if and onlu if 	$m\not\equiv 2~({\rm mod}\ 3)$. 	 	
\end{thm}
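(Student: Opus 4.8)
The plan is first to clear the negative exponents in $f$ by a monomial substitution, and then to reduce the permutation question to a single scalar condition. Put $y=x^q$, $z=x^{q^2}$ and $u:=x^{q^2+q-1}=yz/x$; then $xz/y=u^q$, $xy/z=u^{q^2}$ and $u^qu^{q^2}=x^2$, so the five monomials of $f$ become $x^2=u^{q+q^2}$, $(yz/x)^2=u^2$, $(xz/y)^2=u^{2q}$, $xz/y=u^q$ and $xy/z=u^{q^2}$. A short Euclidean computation gives $\gcd(q^2+q-1,\,q^3-1)=1$, so $x\mapsto x^{q^2+q-1}$ permutes $\mathbb{F}_{q^3}$, and $f$ is a permutation polynomial if and only if
\[
g(u)=u^{q^2+q}+u^2+u^{2q}+u^q+u^{q^2}
\]
is one. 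The advantage is that $g$ has only non-negative exponents, with a single product term $u^{q^2+q}$.

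Next I would analyse $g(u+a)=g(u)$ for $a\in\mathbb{F}_{q^3}^{*}$. As $g$ consists of the additive monomials $u^2,u^{2q},u^q,u^{q^2}$ together with the one product $u^{q^2+q}$, the difference $g(u+a)-g(u)$ is $\mathbb{F}_q$-linear in $u$: with $b=a^q$, $c=a^{q^2}$ it equals $c\,u^q+b\,u^{q^2}+K$, where $K=a^2+b+c+b^2+bc$. The substitution $u=as$ turns $c\,u^q+b\,u^{q^2}=K$ into $s^q+s^{q^2}=K/(bc)$, and since $s^q+s^{q^2}=s+{\rm Tr}_{q^3/q}(s)$ has image exactly $\ker{\rm Tr}_{q^3/q}$, this is solvable if and only if ${\rm Tr}_{q^3/q}\!\big(K/(bc)\big)=0$. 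A direct trace computation yields ${\rm Tr}_{q^3/q}(K/(bc))=\Phi(a)/N(a)$, where $N(a)=a^{1+q+q^2}$ and
\[
\Phi(a)=a^3+b^3+c^3+ab^2+bc^2+ca^2+abc={\rm Tr}_{q^3/q}\!\big(a(a+a^q)^2\big)+N(a).
\]
Hence $g$ (and so $f$) is a permutation of $\mathbb{F}_{q^3}$ if and only if $\Phi(a)\neq0$ for every $a\in\mathbb{F}_{q^3}^{*}$; writing $e_1,e_2,e_3$ for the elementary symmetric functions of $a,b,c$ and eliminating the cyclic sum $ab^2+bc^2+ca^2$ through its quadratic over $\mathbb{F}_q$, one finds that $\Phi(a)=0$ forces $e_1^6+e_1^4e_2+e_2^3+e_1e_2e_3+e_3^2=0$.

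The dependence on $m\bmod 3$ then enters, exactly as in the earlier theorem whose status also splits according to $m\bmod 3$, through the action of $q=2^m$ on $\mathbb{F}_8\subseteq\mathbb{F}_{q^3}$, namely the Frobenius power $t\mapsto t^{2^{\,m\bmod 3}}$. Taking $a$ to be a root of $X^3+X+1$ (irreducible over $\mathbb{F}_q$ precisely when $m\not\equiv0\pmod 3$, hence $a\in\mathbb{F}_{q^3}\setminus\mathbb{F}_q$), here $e_1=0$, $N(a)=1$ and ${\rm Tr}_{q^3/q}(a^3)=1$, so $\Phi(a)=ab^2+bc^2+ca^2$. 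For $m\equiv2$ one has $b=a^4,\,c=a^2$ and $\Phi(a)=a+a^2+a^4=0$, i.e. a genuine collision of $g$, so $f$ is not a permutation; for $m\equiv1$ the same sum equals $1$. Equivalently, setting $\theta=a^{1-q}$, once $\theta\in\mathbb{F}_8$ the quantity $\Phi(a)/a^3$ becomes a Laurent polynomial in $\theta$ (since $\theta^{q^2}$ is then a power of $\theta$), equal to $1+\sum_{i=1}^{6}\theta^i=0$ for all $\theta\in\mathbb{F}_8^{*}\setminus\{1\}$ when $m\equiv2$, to $\theta^3\neq0$ when $m\equiv1$, and to $\theta^3(1+\theta+\theta^2)\neq0$ when $m\equiv0$ (the last factor being nonzero because $\mathbb{F}_4\cap\mathbb{F}_8=\mathbb{F}_2$). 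Since $7\mid q^2+q+1$ when $m\equiv2$, this proves the converse direction at once.

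For the forward direction ($m\not\equiv2\pmod 3$) it remains to prove $\Phi(a)\neq0$ for every $a\neq0$; the case $a\in\mathbb{F}_q$ is immediate because then $\Phi(a)=a^3$. The main obstacle is to show that every zero of $\Phi$ has $\theta=a^{1-q}\in\mathbb{F}_8$ — equivalently, that the surface $e_1^6+e_1^4e_2+e_2^3+e_1e_2e_3+e_3^2=0$ meets the locus of coefficient vectors of irreducible cubics over $\mathbb{F}_q$ only in the $\mathbb{F}_q^{*}$-orbits of $X^3+X+1$ and $X^3+X^2+1$. Granting this confinement to $\mathbb{F}_8$, the evaluations of the previous paragraph give $\Phi(a)\neq0$ for $m\equiv0$ and $m\equiv1$, finishing the proof. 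I expect to secure the confinement by the resultant/Magma elimination already used elsewhere in this section: view $\Phi(a)=0$ together with $b=a^q$, $c=a^{q^2}$ as a polynomial system, eliminate $b,c$ to a univariate condition on $\theta\in\mu_{q^2+q+1}$, and identify its roots. The delicate point, and the real crux, is the branch ambiguity between the two cyclic sums $a^2b+b^2c+c^2a$ and $ab^2+bc^2+ca^2$, which differ by $e_1e_2+e_3$ and are interchanged by reversing the Frobenius generator: it is exactly this ambiguity that encodes whether $m\equiv1$ or $m\equiv2$, so determining which branch forces $\Phi=0$ is what must be handled with care.
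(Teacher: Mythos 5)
Your opening reductions are correct and genuinely different from the paper's route. The substitution $u=x^{q^2+q-1}$ (with $\gcd(q^2+q-1,q^3-1)=1$) does turn $f$ into $g(u)=u^{q^2+q}+u^2+u^{2q}+u^q+u^{q^2}$, the difference $g(u+a)+g(u)=cu^q+bu^{q^2}+K$ is computed correctly, the image of $s\mapsto s^q+s^{q^2}$ is indeed $\ker{\rm Tr}_{q^3/q}$, and the resulting criterion ``$f$ is a PP iff $\Phi(a)=a^3+b^3+c^3+ab^2+bc^2+ca^2+abc\neq 0$ for all $a\in\mathbb{F}_{q^3}^*$'' checks out, as does the symmetric-function identity $e_1^6+e_1^4e_2+e_2^3+e_1e_2e_3+e_3^2=0$ and the $\mathbb{F}_8$ computation giving the collision for $m\equiv 2\pmod 3$. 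The paper argues in the opposite direction: it proves surjectivity, writing $d=a^2$, passing to the trivariate system, extracting $x+y+z=A$, computing the resultant $x^{21}(x+A)^{12}(\eta_1x^2+\eta_2)^2$, showing $\eta_1\eta_2\neq0$ by a case analysis over the roots of $x^3+x+1$ according to $m\bmod 3$, and then verifying by Magma that the explicit candidate $x=(S_1S_2/R_1^2)^{1/2}$ really is a preimage. Your injectivity formulation is cleaner in that it compresses the whole problem into the single condition $\Phi\neq 0$; but it is precisely this condition that you do not establish.

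The gap is the entire forward direction: for $m\not\equiv 2\pmod 3$ you verify $\Phi(a)\neq0$ only for $a\in\mathbb{F}_q$ and for $a$ in (scalar multiples of) $\mathbb{F}_8$, and the ``confinement'' claim that every zero of $\Phi$ lies in the $\mathbb{F}_q^*$-orbits of roots of $X^3+X+1$ and $X^3+X^2+1$ is asserted as a plan, not proved. Worse, the elimination you propose to prove it will not run as described: $\Phi(a,b,c)$ is invariant under the cyclic shift $(a,b,c)\mapsto(b,c,a)$, so the three Frobenius conjugates of the equation $\Phi=0$ are one and the same equation, and the standard ``take the $q$- and $q^2$-powers and compute resultants in $b,c$'' machinery (which is what makes the other proofs in this section work) degenerates to a single cubic surface in three unknowns. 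What remains is exactly the hard question of whether that cyclically symmetric surface contains a point of the special form $(a,a^q,a^{q^2})$, and the necessary condition $e_1^6+e_1^4e_2+e_2^3+e_1e_2e_3+e_3^2=0$ has on the order of $q^2$ solutions $(e_1,e_2,e_3)$, many corresponding to irreducible cubics; for each such cubic one of the two branches $s,s'$ of the cyclic sum kills $\Phi$, and deciding which branch the Frobenius ordering selects is the ``branch ambiguity'' you yourself identify as the crux. Since nothing in the proposal resolves it, the main implication of the theorem is not proved; you would need either a genuine argument for the confinement (or directly for $\Phi\neq0$), or to fall back on something like the paper's explicit-preimage verification.
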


\begin{proof}

We shall show that for each $d\in\mathbb{F}_{q^3}$, the equation
$f(x)=d$
always has a solution in $\mathbb{F}_{q^3}$.
If $d\in\mathbb{F}_{q}$, then we have $f(d^{\frac{1}{2}})=d$ clearly.

In the following, we shall show that for each $d\in \mathbb{F}_{q^3}\backslash\mathbb{F}_{q}$, the equation
\begin{equation*}
f(x)=  x^2 + x^{3q^2-q+1}+x^{3q^3-q^2+q}+x^{q^2-q+3}+x^{q^3-q^2+3q}=d
\end{equation*} 	
has a solution in  $\mathbb{F}_{q^3}$. We have $x\in \mathbb{F}_{q^3}\backslash\mathbb{F}_{q}$ clearly.	Assume $d=a^2$ for some $a\in \mathbb{F}_{q^3}\backslash\mathbb{F}_{q}$.  
Let $y=x^q,z=y^q,b=a^q,c=b^q$ and $A=a+b+c\in \mathbb{F}_{q}$.  Then  we have the following equations of system,
\begin{align*}
\begin{split}
\left \{
\begin{array}{ll}
x^2+\frac{y^2z^2}{x^2}+\frac{x^2z^2}{y^2}+\frac{xz}{y}+\frac{xy}{z}=a^2,                  \\
y^2+\frac{x^2z^2}{y^2}+\frac{x^2y^2}{z^2}+\frac{xy}{z}+\frac{yz}{x}=b^2,                  \\
z^2+\frac{x^2y^2}{z^2}+\frac{y^2z^2}{x^2}+\frac{yz}{x}+\frac{xz}{y}=c^2.                 \\
\end{array}
\right.
\end{split}
\end{align*}
Adding the  three equations above, we get $x+y+z=a+b+c=A$. Then plugging $z=x+y+A$ into the first two equations of the above system, we have
\begin{align*}
\begin{split}
&f_1(y):x^7 + x^6y + Ax^6 + x^5y + A^2x^5 + A^2x^4y + A^3x^4 + x^3y^4 + a^2x^3y^2 + A^2x^3y+ \\
& x^2y^5 + Ax^2y^4 + a^2x^2y^3 + a^2Ax^2y^2 + xy^6 + A^2xy^4 + y^7 + A y^6+ A^2 y^5+A^3 y^4=0
\end{split}
\end{align*}

and
\begin{equation*}
f_2(y): x^7 + x^3y^4 + b^2x^3y^2 + A^4x^3 + xy^6 + xy^5 + b^2xy^4 + A^2 xy^4+ A^2 xy^3+b^2A^2 xy^2 + y^6 + Ay^5
+ A^2y^4 + A^3y^3=0.
\end{equation*} 	
With the help of Magma, the resultant of $f_1$ and $f_2$
with respect to $y$ is
\begin{equation*}
R(f_1,f_2,y)=x^{21}(x+A)^{12}( \eta_1x^2+\eta_2)^2,
\end{equation*}
where $\eta_1=a^6 + a^4 + a^2b^4 + a^2b^2A^2 + a^2b^2 + a^2A^4 + a^2A^2 +b^6 + b^4A^2+ b^4 + b^2A^2 +A^6 +A^2 +1, $ $\eta_2=a^6 + a^5 A^2 + a^5 + a^4 b^4 + a^4 b^2 A^2 + a^4 b^2 +
a^3 b^4 + a^3 b^2 A^2 + a^3 b^2 + a^3 b A^3 + a^3 b A^2 + a^3 b A + a^3 b + a^3 A^4 + a^3 A^3 + a^3 A + a^2 b^6 
+ a^2 b^4 A^2 + a^2 b^3 A^2 + a^2 b^3 + a^2 b^2 A^3 + a^2 b^2 A + a^2 b^2 + a^2 b A^3 + a^2 b A + a^2 A^6 +
a^2 A^5 + a^2 A^2 + a^2 A + a b^6 + a b^5 A + a b^5 + a b^4 A^2 + a b^4 A + a b^2 A^2 + a b A^3 + a b A^2 +
a A^4 + a A^3 + b^7 + b^6 A + b^6 + b^5 A + b^4 A^4 + b^4 A^3 + b^4 A + b^3 A^2 + b^2 A^3 + b^2 A^2 + b A^3 +
A^8 + A^5 + A^3 + A^2.$
Since $f_1$ and $f_2$ have a common root,
we have
\begin{equation*}
x^{21}(x+A)^{12}( \eta_1x^2+\eta_2)^2=0.
\end{equation*}	

  We first show that $\eta_1\eta_2\neq 0$. Assume that   $u$ is a solution of $x^3+x+1=0$ in  $\mathbb{F}_{q^3}$. It is easy to check that $\eta_1=R_1^2=(a+ub+u^5c+u^2)^2(a+u^2b+u^3c+u^4)^2(a+u^4b+u^6c+u)^2$,
  $\eta_2=S_1S_2$, where $R_1=a^3 + a^2c + ab^2 + abc + ab + ac + a + b^3 + bc^2 + bc + b + c^3 + c + 1$, 
  $S_1=a^3 + a^2c^2 + a^2c + a^2 + ab^2 + abc + ab + ac + a + b^4 + b^3 + bc^2 + bc + b + c^4 + c^3 + c^2 + c,
  S_2=a^4 + a^3 + a^2b^2 + a^2c + a^2 + ab^2 + abc + ab + ac + a + b^3 + b^2 + bc^2 + bc + b + c^4 + c^3 + c.$ It is clear that $S_1^q=S_2.$
  
   The following proof is divided   into two cases.

\textbf{Case 1.} $m\equiv 1~({\rm mod}\ 3)$.

In this case, we have $u^q=u^2$.

\textbf{Case 1-1.}
 Assume  $\eta_1=0$.    Without loss of generality, assume that $a+ub+u^5c+u^2=0$.
Raising  it  to its $q$-th and $q^2$-th power, we get the following equations of system,
\begin{numcases}{}
a+ub+u^5c+u^2=0,        \label{6.1} \\
b+u^2c+u^3a+u^4=0,\label{6.2}\\
c+u^4a+u^6b+u=0.    \label{6.3}
\end{numcases}
By computing (\ref{6.1})$*u^2$+(\ref{6.2})$*u^5$, we have
\begin{equation}\label{6.4}
(u^2+u)a+(u^3+u^5)b+u^4+u^2=0.
\end{equation}	
By computing (\ref{6.3})+(\ref{6.2})$*u^5$, we get 
\begin{equation}\label{6.5}
(u^4+u)a+(u^5+u^6)b+u^2+u=0.
\end{equation}
Combining Eqs. (\ref{6.4}) and (\ref{6.5}), we have $u^2(u^3+u^2+1)b=u^4+u+1.$
Since $u^3+u+1=0$, we have $u^3+u^2+1\neq0$ and  $b=\frac{u^4+u+1}{u^2(u^3+u^2+1)}=1$, which is a contradiction. 

Similarly, we have  $(a+u^2b+u^3c+u^4)(a+u^4b+u^6c+u)\neq 0$, which implies $\eta_1\neq 0$.

\textbf{Case 1-2.}
Assume  $\eta_2=0$. Then we have $S_1=S_2=0.$
Therefore, $S_1+S_2=(a^2+ab+ac+b^2+b+c)^2=0$. Then we have the following equations of system,
\begin{numcases}{}
g_1(c): a^2+ab+ac+b^2+b+c=0,       \label{6.6} \\
g_2(c): b^2+bc+ab+c^2+c+a=0.  \label{6.7}
\end{numcases}
Since $g_1$ and $g_2$ have a common root,
we have the resultant of $g_1$ and $g_2$ with respect to $y$ is 
\begin{equation*}
R(g_1,g_2,c)=(a+b)(a+u^3b+u)(a+u^5b+u^4)(a+u^6b+u^2)=0.
\end{equation*}
If $a=b$, then we have $a\in\mathbb{F}_{q}$, which is a contradiction.
If $a+u^3b+u=0$, then  we have $a=u^3b+u$, $c=ub+u^3$. Plugging the above two equations into Eq.(\ref{6.6}), we get $b=1$, which is a contradiction.

Similarly, we have  $(a+u^5b+u^4)(a+u^6b+u^2)\neq 0$, which implies $\eta_2\neq 0$.

\textbf{Case 2.} $m\equiv 0~({\rm mod}\ 3)$.

In this case, we have $u^q=u$.

\textbf{Case 2-1.}
Assume  $\eta_1=0$.    Assume that $a+ub+u^5c+u^2=0$.
Raising  it  to its $q$-th and $q^2$-th power, we get the following equations of system,
\begin{numcases}{}
a+ub+u^5c+u^2=0,        \label{6.8} \\
b+uc+u^5a+u^2=0,\label{6.9}\\
c+ua+u^5b+u^2=0.    \label{6.10}
\end{numcases}
By computing (\ref{6.8})+(\ref{6.9})$*u^2$, we have
\begin{equation}\label{6.11}
(u^2+u)b+(u^3+u^5)c+u^4+u^2=0.
\end{equation}	
By computing (\ref{6.8})+(\ref{6.11})$*u^6$, we get 
\begin{equation}\label{6.12}
(u^4+u)b+(u^5+u^6)c+u^2+u=0.
\end{equation}
Combining Eqs. (\ref{6.11}) and (\ref{6.12}), we have   $b=\frac{u^4+u+1}{u^2(u^3+u^2+1)}=1$, which is a contradiction. 

Similarly, we have  $(a+u^2b+u^3c+u^4)(a+u^4b+u^6c+u)\neq 0$, which implies $\eta_1\neq 0$.

\textbf{Case 2-2.}
Assume  $\eta_2=0$. 
Then $S_1+S_2=(a^2+ab+ac+b^2+b+c)^2=0$. Therefore, we have the following equations of system,
\begin{numcases}{}
g_1(c): a^2+ab+ac+b^2+b+c=0,       \label{6.13} \\
g_2(c): b^2+bc+ab+c^2+c+a=0.  \label{6.14}
\end{numcases}
We have the resultant of $g_1$ and $g_2$ with respect to $y$,
\begin{equation*}
R(g_1,g_2,c)=(a+b)(a+u^3b+u)(a+u^5b+u^4)(a+u^6b+u^2)=0.
\end{equation*}
If $a=b$, then we have $a\in\mathbb{F}_{q}$, which is a contradiction.
If $a+u^3b+u=0$, then  we have 
$(a+u^3b+u)+u^3(a+u^3b+u)^q=a+u^6c+u+u^4=0$ and  $u^6(a+u^3b+u)^{q^2}=u^2a+u^6c+1=0$.
Combining the above two equations, we get $a=\frac{u^4+u+1}{u^2+1}=1$, which is a contradiction.  

Similarly, we have  $(a+u^5b+u^4)(a+u^6b+u^2)\neq 0$, which implies $\eta_2\neq 0$.

Recall that $\eta_1=R_1^2$,
$\eta_2=S_1S_2$, where $S_2=S_1^q.$ Let $R_2=R_1^q, R_3=R_1^{q^2}, S_3=S_1^{q^2}$.  
Let $ x=(\frac{S_1S_2}{R_1^2})^{\frac{1}{2}}$. 
Then $x^2=\frac{S_1S_2}{R_1^2}, y^2=\frac{S_2S_3}{R_2^2}, z^2=\frac{S_1S_3}{R_3^2}$.  
In the following, it suffices to show that
\begin{equation*}
\begin{split}
f(x)&=x^2+\frac{y^2z^2}{x^2}+\frac{x^2z^2}{y^2}+\frac{xz}{y}+\frac{xy}{z}\\
&=\frac{S_1S_2}{R_1^2}+\frac{S_3^2R_1^2}{R_2^2R_3^2}+\frac{S_1^2R_2^2}{R_1^2R_3^2}+\frac{S_1R_2}{R_1R_3}+\frac{S_2R_3}{R_2R_3}\\
&=a^2,
\end{split}
\end{equation*}
which is equivalent to
$$S_1S_2R_2^2R_3^2+S_3^2R_1^4+S_1^2R_2^4+S_1R_1R_2^3R_3+S_2R_1R_2R_3^3=a^2R_1^2R_2^2R_3^2.$$
By Magma, it is easy to check that the above equation holds clearly.

	Conversely, if $m\equiv 2~({\rm mod}\ 3)$, then $x^3+x^2+1=0$ has three solutions in 
$\mathbb{F}_{q^3}$ by Lemma \ref{three}. Assume that $\alpha$ is a solution of $x^3+x^2+1=0$ in $\mathbb{F}_{q^3}$. It is clear that $\alpha\in\mathbb{F}_{2^3}\backslash \{1\}$ and $\alpha^q=\alpha^4, \alpha^{q^2}=\alpha^2.$
Therefore, we have
\begin{equation*}
\begin{split}
f(\alpha)&=\alpha^2+\alpha^3+\alpha^5+\alpha^6+\alpha^3\\
&=\alpha^2(1+\alpha^3+\alpha^4)\\
&=\alpha^2(1+\alpha)\\
&=1=f(1).
\end{split}
\end{equation*}
 We get a contradiction.
The proof is complete.

\end{proof}

\begin{thm}\label{6}
	Let    $m$ be an  integer with $q=2^m$. Then
	\begin{equation*}
	f(x)=  x^2 + x+x^q+x^{2q^2+2q}+x^{2q^2+2}
	\end{equation*}
	is a permutation polynomial over $\mathbb{F}_{q^3}$. 	
\end{thm}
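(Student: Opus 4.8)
The plan is to prove injectivity of $f$ by showing that for every $a\in\mathbb{F}_{q^3}^{*}$ the equation $f(x+a)=f(x)$ has no solution $x\in\mathbb{F}_{q^3}$; since $\mathbb{F}_{q^3}$ is finite this is equivalent to $f$ being a permutation, and it matches the derivative-based arguments of Theorems \ref{1} and the later ones. Writing $y=x^q$, $z=x^{q^2}$, $b=a^q$, $c=a^{q^2}$ and using $x^{2q^2+2q}=z^2y^2$, $x^{2q^2+2}=z^2x^2$, so that $f(x)=x^2+x+y+y^2z^2+x^2z^2$, the identity $(u+v)^2=u^2+v^2$ reduces $f(x+a)=f(x)$ to one equation whose two Frobenius companions are obtained by applying $x\mapsto x^q$. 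Adding the three equations, all quadratic terms in $x,y,z$ cancel and the constant part collapses to $(a+b+c)^2=0$; I would first record $a+b+c=0$, i.e.\ $c=a+b$ together with ${\rm Tr}_{q^3/q}(a)=0$.

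Next I would substitute $c=a+b$ into the first two equations of the system. A pleasant feature is that in each one the coefficients of $x^2$, $y^2$, $z^2$ all coincide, so each collapses to a scalar multiple of $x^2+y^2+z^2$:
\[
(a^2+b^2)(x^2+y^2+z^2)=a^4+b^4+a^2+a+b,\qquad a^2(x^2+y^2+z^2)=a^4+a+b^2 .
\]
If $a=b$ then $a=a^q\in\mathbb{F}_q$, forcing $c=a$ and hence $a+b+c=a=0$, a contradiction; so I may assume $a\neq b$ and eliminate $x^2+y^2+z^2$ between the two displayed relations. After clearing denominators this yields the single polynomial condition
\[
a^4b^2+a^2b^4+a^4+b^4+a^2b^2+a^2b+ab^2=0 .
\]

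The decisive step is to read this condition through the elementary symmetric functions of the Frobenius orbit $\{a,b,c\}$. Since $a\notin\mathbb{F}_q$, its minimal polynomial over $\mathbb{F}_q$ is the irreducible cubic $g(X)=X^3+e_2X+e_3$ with $e_1=a+b+c=0$, $e_2=ab+bc+ca=a^2+ab+b^2$ and $e_3=abc=a^2b+ab^2\neq0$. A direct check shows the displayed condition is exactly $e_2^2+e_3^2+e_3=0$, that is $e_2^2=e_3^2+e_3$. Taking square roots (unique in characteristic two, where $\sqrt{\,\cdot\,}$ is additive) gives $e_2=e_3+\sqrt{e_3}=r^2+r$ with $r:=\sqrt{e_3}\in\mathbb{F}_q$. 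But then
\[
g(r)=r^3+e_2r+e_3=r^3+(r^2+r)r+r^2=0,
\]
so $g$ has the root $r\in\mathbb{F}_q$, contradicting its irreducibility. Hence no admissible $a\neq0$ exists and $f$ is a permutation.

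The main obstacle is precisely identifying the $(a,b)$-relation with $e_2^2=e_3^2+e_3$ and noticing that this relation forces $\sqrt{e_3}$ to be a root of the minimal cubic $g$; once this is seen the contradiction is immediate and, crucially, independent of $m$, which is consistent with the unconditional statement of the theorem. If one prefers to stay strictly within the quoted lemmas, the same conclusion follows from Lemma \ref{three}: the existence of the rational root $r$ shows $g$ cannot be of type $(3)$, so $a$ would not be of degree three over $\mathbb{F}_q$.
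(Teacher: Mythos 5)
Your proposal is correct. The reduction is identical to the paper's: you pass to the trivariate system via $y=x^q$, $z=x^{q^2}$, sum the three Frobenius conjugates to get $a+b+c=0$, substitute $c=a+b$ so that each remaining equation collapses to a scalar multiple of $x^2+y^2+z^2$, and eliminate to obtain the single condition $a^4b^2+a^2b^4+a^4+b^4+a^2b^2+a^2b+ab^2=0$ (the paper reaches exactly this polynomial, modulo a typo in one of its intermediate fractions). Where you diverge is the endgame. The paper factors this polynomial as $(ab+a+b)(ab+a+b^2)(a^2+ab+b)$ and disposes of each factor by the case analysis of Theorem \ref{1}, expressing $c$ rationally in terms of $a$ or $b$ and deriving $c\in\{0,1\}$ or $c=a$. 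You instead observe that the condition is exactly $e_2^2=e_3^2+e_3$ for the elementary symmetric functions of the Frobenius orbit $\{a,b,c\}$ (with $e_1=0$), so that $r=\sqrt{e_3}\in\mathbb{F}_q$ satisfies $e_2=r^2+r$ and hence is a root of the minimal cubic $X^3+e_2X+e_3$ of $a$ over $\mathbb{F}_q$, contradicting its irreducibility; I have checked that $e_2^2+e_3^2+e_3$ does expand to the displayed polynomial. Your finish avoids both the explicit trivariate factorization and the case-by-case substitutions, is manifestly independent of $m$, and makes visible \emph{why} the condition is vacuous (it would force $a$ to generate a reducible cubic); the paper's factorization approach, on the other hand, is more mechanical and reuses the cases already worked out for Theorem \ref{1}, which is why the authors can dispatch this theorem in two lines by reference.
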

\begin{proof}

We shall show that for any fixed $a\in \mathbb{F}^*_{q^3}$, the   equation $f(x+a)=f(x)$ has no solution in $\mathbb{F}_{q^3}$.
Let $y=x^q,z=x^{q^2},b=a^q,c=a^{q^2}$. Then we have the following equations of system, 
\begin{align*}
\begin{split}
\left \{
\begin{array}{ll}
(x+a)^2 + (x+a)+(y+b)+(y+b)^2(z+c)^2+(x+a)^2(z+c)^2=x^2 + x+y+y^2z^2+x^2z^2,                   \\
(y+b)^2 + (y+b)+(z+c)+(z+c)^2(x+a)^2+(y+b)^2(x+a)^2=y^2 + y+z+z^2x^2+y^2x^2,                 \\
(z+c)^2 + (z+c)+(x+a)+(x+a)^2(y+b)^2+(z+c)^2(y+b)^2=z^2 + z+x+x^2y^2+z^2y^2,                  \\
\end{array}
\right.
\end{split}
\end{align*}
which is equivalent to
\begin{align*}
\begin{split}
\left \{
\begin{array}{ll}
c^2x^2+c^2y^2+(a^2+b^2)z^2+a+b+a^2c^2+a^2+b^2c^2=0,                   \\
a^2y^2+a^2z^2+(b^2+c^2)x^2+b+c+b^2a^2+b^2+c^2a^2=0,               \\
b^2z^2+b^2x^2+(c^2+a^2)y^2+c+a+c^2b^2+c^2+a^2b^2=0.                \\
\end{array}
\right.
\end{split}
\end{align*}
Adding the  three equations above, we get	$a+b+c=0.$ Plugging $c=a+b$ into the last two equations of the above system, we have 
$$x^2+y^2+z^2=\frac{a+b+a^2c^2+a^2+b^2c^2}{c^2}=\frac{b+b^4+a^2}{b^2}.$$
Therefore, we have 
\begin{equation*}
\begin{split}
&ab^2+a^4b^2+b^4+a^2b+a^2b^4+a^2b^2+a^4\\
&= (ab+a+b)(ab+a+b^2)(a^2+ab+b)=0.
\end{split}
\end{equation*}	
Similar to the proofs of Case 1 and Case 2 of Theorem  \ref{1}, we can derive a contradiction. Therefore, $f(x)$ is  a permutation polynomial over $\mathbb{F}_{q^3}$.
\end{proof}

\begin{thm}
	Let    $m$ be an  integer with $q=2^m$. Then
	\begin{equation*}
	f(x)=  x^2 + x^{3q^2-q+1}+x^{3q^3-q^2+q}+x^{q^2-q+3}+x^{q^3-q^2+3q}
	\end{equation*}
	is a permutation polynomial over $\mathbb{F}_{q^3}$. 	
\end{thm}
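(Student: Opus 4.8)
The plan is to prove that $f$ is a bijection of $\mathbb{F}_{q^{3}}$ by first replacing $f$ by the rational map it induces on the field. For $x\in\mathbb{F}_{q^{3}}^{*}$ write $y=x^{q}$, $z=x^{q^{2}}$, so that $x^{q^{3}}=x$; reducing each exponent modulo $q^{3}-1$ turns the five monomials into
\begin{equation*}
f(x)=x^{2}+\frac{xz^{3}}{y}+\frac{x^{3}y}{z}+\frac{x^{3}z}{y}+\frac{xy^{3}}{z}.
\end{equation*}
Pairing the terms over $y$ and over $z$, and using $y^{4}+z^{4}=(y^{2}+z^{2})^{2}$ together with $x^{2}+y^{2}+z^{2}=(x+y+z)^{2}$ (characteristic two), I would collapse this to the closed form
\begin{equation*}
f(x)=x^{2}+\frac{x\,(y^{2}+z^{2})(x^{2}+y^{2}+z^{2})}{yz}=\frac{T^{2}x^{4}+(T^{4}+N)x^{2}}{N},
\end{equation*}
where $T={\rm Tr}_{q^{3}/q}(x)=x+y+z\in\mathbb{F}_{q}$ and $N=x^{1+q+q^{2}}\in\mathbb{F}_{q}^{*}$ are the trace and norm of $x$ (using $yz=N/x$ and $y^{2}+z^{2}=x^{2}+T^{2}$). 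This closed form is the engine of the whole argument, and $f(0)=0$ settles the origin.

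Second, I would extract a trace invariant. Since $f$ has coefficients in $\mathbb{F}_{2}$, $f(x)^{q}=f(y)$ and $f(x)^{q^{2}}=f(z)$, and a direct check shows that all the fractional contributions cancel when the three conjugate values are summed, so that ${\rm Tr}_{q^{3}/q}(f(x))=x^{2}+y^{2}+z^{2}=T^{2}$. Hence if $f(x)=d$ then $T^{2}={\rm Tr}_{q^{3}/q}(d)$, i.e. the trace $T$ of any preimage of $d$ is forced to be the unique square root of ${\rm Tr}_{q^{3}/q}(d)$ in $\mathbb{F}_{q}$. When ${\rm Tr}_{q^{3}/q}(d)=0$ this gives $T=0$, the closed form degenerates to $f(x)=x^{2}$, and $x=d^{1/2}$ is the unique preimage (squaring is a bijection preserving the trace-zero hyperplane, since ${\rm Tr}_{q^{3}/q}(x^{2})={\rm Tr}_{q^{3}/q}(x)^{2}$). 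Thus it remains only to treat fibres over $d$ with ${\rm Tr}_{q^{3}/q}(d)=T^{2}\neq0$; as $\{x:{\rm Tr}_{q^{3}/q}(x)=T\}$ and $\{d:{\rm Tr}_{q^{3}/q}(d)=T^{2}\}$ both have $q^{2}$ elements and $f$ maps the former into the latter, it suffices to prove injectivity on each such coset.

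Third, for $T\neq0$ I would run the resultant scheme of Theorem 6. With $d=a^{2}$, $b=a^{q}$, $c=a^{q^{2}}$ and $A=a+b+c$, the three conjugate equations $f(x)=a^{2},f(y)=b^{2},f(z)=c^{2}$ add (by the cancellation above) to $x+y+z=A$; substituting $z=x+y+A$ and clearing the denominators $yz$ and $xz$ yields two polynomials $f_{1}(y),f_{2}(y)$ whose common root is the desired solution, so that a Magma computation of $R(f_{1},f_{2},y)$ should factor as a power of $x$, a power of $(x+A)$, and one genuine factor, from which a closed expression for the candidate $x$ in terms of $a,b,c$ is read off and then verified to satisfy $f(x)=a^{2}$. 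The clean part of this can also be done by hand: if $f(x_{1})=f(x_{2})$ with equal norms, the closed form gives $(x_{1}^{2}+x_{2}^{2})\bigl(T^{2}(x_{1}^{2}+x_{2}^{2})+T^{4}+N\bigr)=0$, and the vanishing of the second factor forces $x_{1}+x_{2}\in\mathbb{F}_{q}$, whence ${\rm Tr}_{q^{3}/q}(x_{1}+x_{2})=x_{1}+x_{2}=0$ and $x_{1}=x_{2}$.

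Finally, I expect the main obstacle to be exactly the unequal-norm part of the $T\neq0$ case: showing that the genuine factor of the resultant never vanishes and that its root indeed has trace $T$, for every $m$ with no congruence restriction. This is precisely the step where Theorem 6 required $m\not\equiv2\pmod 3$, so the crux is to confirm that for the present $f$ the analogue of that obstruction factor is identically nonzero — which is the structural reason the statement is unconditional — and then to carry out the (heavy but mechanical) resultant factorization and the concluding substitution check.
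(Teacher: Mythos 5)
Your closed form $f(x)=\bigl(T^2x^4+(T^4+N)x^2\bigr)/N$ with $T={\rm Tr}_{q^3/q}(x)$ and $N=x^{q^2+q+1}$ is correct, and the consequences you draw from it are sound and genuinely absent from the paper: the invariant ${\rm Tr}_{q^3/q}(f(x))=T^2$, the reduction to the trace cosets $\{x:{\rm Tr}_{q^3/q}(x)=T\}$, the degeneration to $x\mapsto x^2$ when $T=0$, and the equal-norm injectivity computation all check out. But the proof is not complete. The decisive case --- two elements of the same trace coset with \emph{different} norms, equivalently the existence and uniqueness of a preimage of $d$ with ${\rm Tr}_{q^3/q}(d)\neq0$ --- is precisely what you defer to an unperformed ``heavy but mechanical resultant factorization and concluding substitution check,'' and you say so yourself. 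Since $N$ is a degree-$(q^2+q+1)$ function of $x$ rather than a free parameter, the closed form gives no leverage on that case; as written, your argument establishes injectivity only on the norm-fibres of each trace coset, which is strictly weaker than the theorem. Naming the obstacle is not the same as removing it.

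For comparison, the paper closes exactly this gap by a surjectivity argument: for $d=a^2$ with $a\notin\mathbb{F}_q$ it substitutes $z=x+y+A$ into the conjugate system, computes $R(f_1,f_2,y)=A^2x^3(x+A)^2\bigl(U_1U_2x^2+R_1R_2\bigr)^2$ with $R_1=(a+b+c)^3+ac$, $R_2=(a+b+c)^3+ab$, $U_1=(a+b+c)^2+b$, $U_2=(a+b+c)^2+c$, proves $U_1\neq0$ and $R_1\neq0$ directly from $a\notin\mathbb{F}_q$ (this is the structural reason no congruence condition on $m$ appears, confirming your guess about why the earlier theorem needed $m\not\equiv2\ ({\rm mod}\ 3)$ and this one does not), and then \emph{verifies} by Magma that $x=(R_1R_2/(U_1U_2))^{1/2}$ satisfies $f(x)=a^2$. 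That final substitution check is essential --- a root of the resultant is only a candidate solution of the original system --- and it, together with the nonvanishing of $U_1$ and $R_1$, is the content your proposal is missing. If you carry out that computation, your trace/norm formulation would make a cleaner write-up of the easy cases, but it does not spare you the resultant work at the core.
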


\begin{proof}

	We shall prove that for each $d\in\mathbb{F}_{q^3}$, the equation
\begin{equation*}
f(x)=  x^2 + x^{3q^2-q+1}+x^{3q^3-q^2+q}+x^{q^2-q+3}+x^{q^3-q^2+3q}=d
\end{equation*}
always has a solution in $\mathbb{F}_{q^3}$.
For any $d\in\mathbb{F}_{q}$, we have $f(d^{\frac{1}{2}})=d$ clearly.

In the following, we shall show that for each $d\in \mathbb{F}_{q^3}\backslash\mathbb{F}_{q}$, the equation
\begin{equation*}
f(x)=  x^2 + x^{3q^2-q+1}+x^{3q^3-q^2+q}+x^{q^2-q+3}+x^{q^3-q^2+3q}=d
\end{equation*} 	
  has a solution in  $\mathbb{F}_{q^3}$. It is clear that $x\in \mathbb{F}_{q^3}\backslash\mathbb{F}_{q}$.	Assume $d=a^2$ for some $a\in \mathbb{F}_{q^3}\backslash\mathbb{F}_{q}$.  
Let $y=x^q,z=y^q,b=a^q,c=b^q$ and $A=a+b+c\in \mathbb{F}_{q}$.  Then  we have the following equations of system,
\begin{align*}
\begin{split}
\left \{
\begin{array}{ll}
x^2+\frac{xz^3}{y}+\frac{x^3y}{z}+\frac{x^3z}{y}+\frac{xy^3}{z}=a^2,                  \\
y^2+\frac{yx^3}{z}+\frac{y^3z}{x}+\frac{y^3x}{z}+\frac{yz^3}{x}=b^2,                  \\
z^2+\frac{zy^3}{x}+\frac{z^3x}{y}+\frac{z^3y}{x}+\frac{zx^3}{y}=c^2.                 \\
\end{array}
\right.
\end{split}
\end{align*}
Adding the  three equations above, we get $x+y+z=a+b+c=A$. Then plugging $z=x+y+A$ into the first two equations of the above system, we have
\begin{equation*}
f_1(y):x^3y + A^2 x^3+ x^2y^2 + Ax^2y + a^2xy + A^4x + a^2 y^2+ a^2Ay=0
\end{equation*}
and
\begin{equation*}
f_2(y): x^2y^2 + b^2 x^2+ xy^3 + Axy^2 + b^2xy + b^2Ax + A^2y^3 +A^4 y=0.
\end{equation*} 	
By   Magma, the resultant of $f_1$ and $f_2$
with respect to $y$ is
\begin{equation*}
R(f_1,f_2,y)=A^2x^3(x+A)^2( (a^2 + b^2 + c^2 + c)(a^2 + b^2 + c^2 + b)x^2+((a+b+c)^3+ac)(a+b+c)^3+ab))^2.
\end{equation*}
Since $f_1$ and $f_2$ have a common root,
we have
\begin{equation*}
A^2x^3(x+A)^2( (a^2 + b^2 + c^2 + c)(a^2 + b^2 + c^2 + b)x^2+((a+b+c)^3+ac)(a+b+c)^3+ab))^2=0.
\end{equation*}	
Let $R_1=(a+b+c)^3+ac, R_2=(a+b+c)^3+ab, R_3=(a+b+c)^3+bc, U_1=(a+b+c)^2+b, U_2=(a+b+c)^2+c, U_3=(a+b+c)^2+a.$ We claim that $U_1\neq 0$. Otherwise, $a^2 + b^2 + c^2 +b=(a^2 + b^2 + c^2 + b)^{q^2}=a^2 + b^2 + c^2 + a=0$, which implies   $a=b\in \mathbb{F}_{q}$, a contradiction.
Similarly, $R_1\neq 0$.
Let $ x=(\frac{R_1R_2}{U_1U_2})^{\frac{1}{2}}$. 
Then $x^2=\frac{R_1R_2}{U_1U_2}, y^2=\frac{R_2R_3}{U_2U_3}, z^2=\frac{R_1R_3}{U_1U_3}$.  
In the following, it suffices to show that
\begin{equation*}
\begin{split}
(f(x))^2&=x^4+\frac{x^2z^6}{y^2}+\frac{x^6y^2}{z^2}+\frac{x^6z^2}{y^2}+\frac{x^2y^6}{z^2}\\
&=\frac{R_1^2R_2^2}{U_1^2U_2^2}+\frac{R_1^4R_3^2}{U_1^4U_3^2}+\frac{R_1^2R_2^4}{U_1^2U_2^4}+\frac{R_1^4R_2^2}{U_1^4U_2^2}+\frac{R_2^4R_3^2}{U_2^4U_2^2}\\
&=a^4,
\end{split}
\end{equation*}
which is equivalent to
$$R_1^2R_2^2U_1^2U_2^2U_3^4+R_1^4R_3^2U_2^4U_3^2+R_1^2R_2^4U_1^2U_3^4+R_1^4R_2^2U_2^2U_3^4+R_2^4R_3^2U_1^4U_3^2=a^4U_1^4U_2^4U_3^4.$$
With the help of Magma, one can check that the above equation holds clearly.
We complete the proof.
\end{proof}

\begin{thm}
	Let    $m$ be an  integer with $q=2^m$. Then
	\begin{equation*}
	f(x)=  x^4 + x^{q^2+4}+x^{4q+1}+x^{4q^2+1}+x^{q+4}
	\end{equation*}
	is a permutation polynomial over $\mathbb{F}_{q^3}$ if and only if $m$ is odd. 	
\end{thm}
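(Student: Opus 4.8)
The plan is to mirror the proof of the second theorem, since the present pentanomial shares four of its five monomials and differs only in its leading term ($x^{4}$ in place of $x$). Writing $y=x^{q}$, $z=x^{q^{2}}$, $b=a^{q}$, $c=a^{q^{2}}$, one has $f(x)=x^{4}(1+y+z)+x(y^{4}+z^{4})$, so for the sufficiency (permutation when $m$ is odd) I would assume $f(x+a)=f(x)$ for some $a\in\mathbb{F}_{q^{3}}^{*}$ and expand. After the monomials common to $f(x+a)$ and $f(x)$ cancel, the relation becomes
$$(b+c)x^{4}+(b^{4}+c^{4})x+a^{4}(y+z)+a(y^{4}+z^{4})+a^{4}+a^{4}(b+c)+a(b^{4}+c^{4})=0,$$
and together with its two Frobenius images it forms the usual system. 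Adding the three equations collapses every non-constant term in pairs and leaves $(a+b+c)^{4}=0$, hence $a+b+c=0$.

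Next I would put $c=a+b$. Setting $s=x+y+z={\rm Tr}_{q^{3}/q}(x)\in\mathbb{F}_{q}$ and using $x^{4}+y^{4}+z^{4}=s^{4}$, the first two equations reduce to $as^{4}+a^{4}s+a^{4}=0$ and $bs^{4}+b^{4}s+b^{4}=0$, that is
$$s^{4}+a^{3}s+a^{3}=0,\qquad s^{4}+b^{3}s+b^{3}=0.$$
Subtracting yields $(a^{3}+b^{3})(s+1)=0$. The branch $s=1$ is impossible, since it would force $1=0$; hence $a^{3}=b^{3}$, and because $m$ is odd we have $\gcd(3,q^{3}-1)=1$, so cubing is a bijection on $\mathbb{F}_{q^{3}}$ and $a=b$. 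Then $c=a+b=0$, contradicting $c=a^{q^{2}}\neq0$. This proves $f$ is a permutation.

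For the necessity (non-permutation when $m$ is even) I would construct an explicit collision, again following the second theorem. Because $m$ is even, $3\mid q-1$, so there is a non-cube $\lambda\in\mathbb{F}_{q}^{*}$; then $y^{3}+\lambda$ has no root in $\mathbb{F}_{q}$ and is irreducible, i.e. $g=(3)$. By Lemma \ref{four} the quartic $x^{4}+\lambda x+\lambda$ is therefore of type $(1,3)$ over $\mathbb{F}_{q}$: it has a single root $\gamma_{0}\in\mathbb{F}_{q}$ (so $\gamma_{0}^{4}=\lambda(\gamma_{0}+1)$) and an irreducible cubic cofactor. That cofactor splits in $\mathbb{F}_{q^{3}}$, so I may take a root $\gamma_{1}\in\mathbb{F}_{q^{3}}\setminus\mathbb{F}_{q}$, and since the quartic has no $x^{3}$-term, Vieta gives ${\rm Tr}_{q^{3}/q}(\gamma_{1})=\gamma_{1}+\gamma_{1}^{q}+\gamma_{1}^{q^{2}}=\gamma_{0}$.

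The decisive step is to verify $f(\gamma_{1})=f(\gamma_{0})$. From $f(\gamma_{0})=\gamma_{0}^{4}$ and, using $\gamma_{1}^{q}+\gamma_{1}^{q^{2}}=\gamma_{0}+\gamma_{1}$ together with $\gamma_{1}^{4}=\lambda(\gamma_{1}+1)$,
$$f(\gamma_{1})=(1+\gamma_{0})\gamma_{1}^{4}+\gamma_{0}^{4}\gamma_{1}=\bigl[\lambda(1+\gamma_{0})+\gamma_{0}^{4}\bigr]\gamma_{1}+\lambda(1+\gamma_{0}),$$
where the bracket vanishes because $\gamma_{0}^{4}=\lambda(1+\gamma_{0})$; hence $f(\gamma_{1})=\lambda(1+\gamma_{0})=\gamma_{0}^{4}=f(\gamma_{0})$, and $\gamma_{1}\neq\gamma_{0}$ shows $f$ is not injective. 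I expect the necessity direction to be the main obstacle: one must identify the governing quartic $x^{4}+\lambda x+\lambda$, confirm its $(1,3)$-type via Lemma \ref{four} so that the trace identity ${\rm Tr}_{q^{3}/q}(\gamma_{1})=\gamma_{0}$ holds, and then execute the cancellation exactly. The sufficiency direction is routine once the reduction to $s^{4}+a^{3}s+a^{3}=0$ is in hand.
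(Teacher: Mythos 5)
Your proof is correct and follows essentially the same route as the paper's: the trivariate reduction $y=x^q$, $z=x^{q^2}$, adding the three conjugate equations to get $a+b+c=0$, then a contradiction from $a^3=b^3$ when $m$ is odd; and for the converse the explicit collision $f(\gamma_1)=f(\gamma_0)$ built from the roots of $x^4+\alpha x+\alpha$ with ${\rm Tr}_{q^3/q}(\gamma_1)=\gamma_0$. The only real difference is the sufficiency endgame, where the paper invokes a Magma-computed resultant $R(f_1,f_2,z)=a^4b^4(a+b)^4(a^2+ab+b^2)^4$ to force $a^2+ab+b^2=0$, while you note that both reduced equations involve only $s=x+y+z$, namely $s^4+a^3s+a^3=0$ and $s^4+b^3s+b^3=0$, and subtract (disposing of the $s=1$ branch directly) --- an equivalent but computer-free shortcut.
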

\begin{proof}
	We shall show that for any fixed $a\in \mathbb{F}^*_{q^3}$, the   equation $f(x+a)=f(x)$ has no solution in $\mathbb{F}_{q^3}$.
	Let $y=x^q,z=x^{q^2},b=a^q,c=a^{q^2}$. Then we have the following equations of system, 
	\begin{align*}
	\begin{split}
	\left \{
	\begin{array}{ll}
	(x+a)^4+(x+a)^4(z+c)+(x+a)(y+b)^4+(x+a)(z+c)^4+(x+a)^4(y+b)=x^4+x^4z+xy^4+xz^4+x^4y,                   \\
	(y+b)^4+(y+b)^4(x+a)+(y+b)(z+c)^4+(y+b)(x+a)^4+(y+b)^4(z+c)=y^4+y^4x+yz^4+yx^4+y^4z,                 \\
	(z+c)^4+(z+c)^4(y+b)+(z+c)(x+a)^4+(z+c)(y+b)^4+(z+c)^4(x+a)=z^4+z^4y+zx^4+zy^4+z^4x,                  \\
	\end{array}
	\right.
	\end{split}
	\end{align*}
	which is equivalent to
	\begin{align*}
	\begin{split}
	\left \{
	\begin{array}{ll}
	(b+c)x^4+(b^4+c^4)x+ay^4+a^4y+az^4+a^4z+a^4b+a^4c+a^4+ab^4+ac^4=0,                   \\
	(c+a)y^4+(c^4+a^4)y+bz^4+b^4z+bx^4+b^4x+b^4c+b^4a+b^4+bc^4+ba^4=0,               \\
	(a+b)z^4+(a^4+b^4)z+cx^4+c^4x+cy^4+c^4y+c^4a+c^4b+c^4+ca^4+cb^4=0.                \\
	\end{array}
	\right.
	\end{split}
	\end{align*}
	Adding the  three equations above, we get	$a+b+c=0.$ Plugging $c=a+b$ into the fist two equations of the above system, we have 
	\begin{equation*}
	f_1(z):ax^4+a^4x+ay^4+a^4y+az^4+a^4z+a^4=0,
	\end{equation*}
	and
	\begin{equation*}
	f_2(z): by^4+b^4y+bz^4+b^4z+bx^4+b^4x+b^4=0. 
	\end{equation*} 
	With the help of Magma, we have 
	 the resultant of $f_1$ and $f_2$
	with respect to $z$,  
	\begin{equation*}
	R(f_1,f_2,z)=a^4b^4(a+b)^4(a^2+ab+b^2)^4=0.
	\end{equation*}
	Since $a+b+c=0$ and $a\neq0$, we have $a\neq b$. Therefore, $a^2+ab+b^2=0$. Then $\frac{a}{b}$ is a solution of $x^2+x+1=0$ in 	$\mathbb{F}_{q^3}$, which contradicts the assumption that $m$ is odd and Lemma \ref{two}.
		
	Conversely, if $m$ is even, since $g(x)=x^3$ is not a permutation polynomial over $\mathbb{F}_{q}$, there exists an element $\alpha\in\mathbb{F}^*_{q}$ such that $x^3+\alpha=0$ has  no solutions in $\mathbb{F}_{q}$. By Lemma \ref{four}, we have that 
	$x^4+\alpha x+\alpha=0$ has only one solution in $\mathbb{F}_{q}$, denoted as $\gamma_0$.
	Then we have $\alpha=\frac{\gamma_0^4}{\gamma_0+1}$.

	  Assume that $\omega$ is a primitive 	element of  ${\mathbb F}_{q^3}$  and $\alpha=\omega^{(q^2+q+1)t}$    for some positive integer $t$. Noting that $3\rvert q^2+q+1$, it is easy to check that $x^3+\alpha=0$ has three solutions in  $\mathbb{F}_{q^3}$. By  Lemma \ref{four}, we have that 
	$x^4+\alpha x+\alpha=0$ has four solutions in $\mathbb{F}_{q^3}$. Assume that $\gamma_1\in\mathbb{F}_{q^3}\backslash\mathbb{F}_{q}$  is a solution  of $x^4+\alpha x+\alpha=0$, which means that $\gamma_1^4+\alpha\gamma_1+\alpha=\gamma_1^4+\frac{\gamma_0^4}{\gamma_0+1}\gamma_1+\frac{\gamma_0^4}{\gamma_0+1}=0$. 
	It is clear that $\gamma_1+\gamma_1^q+\gamma_1^{q^2}\in\mathbb{F}_{q}$ is a solution of $x^4+\alpha x+\alpha=0$. Therefore, $\gamma_1+\gamma_1^q+\gamma_1^{q^2}=\gamma_0$.
	Then we have the following equations,
	\begin{equation*}
	\begin{split}
	f(\gamma_1)&=\gamma_1^4+ \gamma_1^{q^2+4}+\gamma_1^{4q+1}+\gamma_1^{4q^2+1}+\gamma_1^{q+4}\\
	&=\gamma_1^4+\gamma_1(\gamma_1^q+\gamma_1)^{4q}+\gamma_1^4(\gamma_1^q+\gamma_1)^q\\
	&=\gamma_1^4+\gamma_1(\gamma_1^{q^2}+\gamma_0)^{4q}+\gamma_1^4(\gamma_1^{q^2}+\gamma_0)^q\\
	&=\gamma_1^4+\gamma_1\gamma_0^4+\gamma_1^4\gamma_0\\
	&=(\gamma_0+1)(\gamma_1^4+\frac{\gamma_0^4}{\gamma_0+1}\gamma_1)\\
	&=(\gamma_0+1)(\frac{\gamma_0^4}{\gamma_0+1})\\
	&=\gamma_0^4=f(\gamma_0),
	\end{split}
	\end{equation*}
	a contradiction.
	We complete the proof.
\end{proof}

Similarly, we have the following theorem.

\begin{thm}
	Let    $m$ be an  integer with $q=2^m$. Then
	\begin{equation*}
	f(x)=  x^2 + x^{q+2}+x^{q^2+2q}+x^{3q^2}+x^{3}
	\end{equation*}
	is a permutation polynomial over $\mathbb{F}_{q^3}$ if and only if $m\not\equiv 2~({\rm mod}\ 3)$. 	 
\end{thm}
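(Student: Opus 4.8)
The plan is to follow the $f(x+a)=f(x)$ strategy used for the earlier pentanomials in this section. Writing $f$ in multivariate form with $y=x^q$, $z=x^{q^2}$ gives $f(x)=x^3+x^2+x^2y+y^2z+z^3$. For a fixed $a\in\mathbb{F}^*_{q^3}$ I would put $b=a^q$, $c=a^{q^2}$ and expand $f(x+a)+f(x)=0$ together with its two Frobenius conjugates, producing a system of three equations in $x,y,z$. Adding the three collapses every $x,y,z$-term and leaves $(a+b+c)^2=0$, so $a+b+c=0$, exactly as in the previous proofs. Substituting $c=a+b$ and using $x^2+y^2+z^2=(x+y+z)^2$ in characteristic two, all three quadratic coefficients become $c$, all three linear coefficients become $a^2$, and the constants reduce to $a^2,b^2,c^2$; hence with $T=x+y+z$ the system becomes
\begin{align*}
cT^2+a^2T+a^2=0,\qquad aT^2+b^2T+b^2=0,\qquad bT^2+c^2T+c^2=0.
\end{align*}

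For a genuine solution $T={\rm Tr}_{q^3/q}(x)\in\mathbb{F}_q$. First I would note that $a\neq0$ forces $T\neq0$ and $T\neq1$ (if $T=0$ the first equation gives $a=0$; if $T=1$ it gives $c=a^{q^2}=0$). Eliminating the $T^2$-term between the first two equations (form $a$ times the first plus $c$ times the second) yields $(a^3+b^2c)(T+1)=0$, and since $T\neq1$ this gives the key relation $a^3=b^2c$. Now set $p=a/b=a^{1-q}$. Combined with $a+b+c=0$ (so $c=a+b$), the relation reads $a^3+ab^2+b^3=0$, i.e. $p^3+p+1=0$; thus $p$ lies in $\mathbb{F}_8\subseteq\mathbb{F}_{q^3}$ and has multiplicative order $7$. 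Combined instead with $b=a^q$, $c=a^{q^2}$, the same relation gives $p^3=c/b=a^{q^2-q}=p^{-q}$, i.e. $p^{q+3}=1$. Since $p$ has order $7$ this forces $7\mid q+3$, that is $q\equiv4\pmod7$, i.e. $m\equiv2\pmod3$. Therefore, if $m\not\equiv2\pmod3$ there is no solution and $f$ permutes $\mathbb{F}_{q^3}$.

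For the converse, assume $m\equiv2\pmod3$, so $q\equiv4\pmod7$, and construct an explicit collision. Choose $p\in\mathbb{F}_8$ with $p^3+p+1=0$; then $p^q=p^4$ and $p+p^2+p^4=0$ (because $p^4=p(p+1)=p^2+p$). Pick $\rho\in\mathbb{F}^*_q$ with ${\rm Tr}_q(1/\rho)=0$ (possible since the absolute trace is onto) and set $a=\rho p^2$. A short check gives $b=a^q=\rho p$, $c=a^{q^2}=\rho p^4$, so that $a+b+c=\rho(p^2+p+p^4)=0$ and $a^2/c=\rho$. By Lemma \ref{two} the equation $T^2+\rho T+\rho=0$ then has a root $T\in\mathbb{F}_q$ (since ${\rm Tr}_q(\rho/\rho^2)={\rm Tr}_q(1/\rho)=0$), and choosing any $x\in\mathbb{F}_{q^3}$ with ${\rm Tr}_{q^3/q}(x)=T$ gives, through the reduction above, $f(x+a)+f(x)=cT^2+a^2T+a^2=c(T^2+\rho T+\rho)=0$. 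Hence $f(x+a)=f(x)$ with $a\neq0$, and $f$ is not a permutation.

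The expansion and the three additions are routine, and the forward elimination is short once one spots the relation $a^3=b^2c$. The step requiring the most care is the converse, where $a$ and $x$ must be produced simultaneously: the choices $a=\rho p^2$ and $T={\rm Tr}_{q^3/q}(x)$ have to be arranged so that $a+b+c=0$, so that $a^2/c$ lands in $\mathbb{F}_q$, and so that the resulting quadratic in $T$ is solvable over $\mathbb{F}_q$, all at once. The identity $p+p^2+p^4=0$ for the roots of $x^3+x+1$ and the freedom to pick $\rho$ with ${\rm Tr}_q(1/\rho)=0$ are precisely what make these three requirements compatible, so confirming their compatibility is the crux of the argument.
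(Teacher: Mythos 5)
Your proof is correct, and I have checked the key computations: the trivariate form $x^3+x^2+x^2y+y^2z+z^3$, the collapse of $f(x+a)+f(x)$ to $cT^2+a^2T+a^2$ with $T={\rm Tr}_{q^3/q}(x)$ once $a+b+c=0$ is imposed, the elimination $a\cdot(cT^2+a^2T+a^2)+c\cdot(aT^2+b^2T+b^2)=(a^3+b^2c)(T+1)$, and the resulting conditions $p^3+p+1=0$ and $p^{q+3}=1$ which are compatible exactly when $7\mid q+3$, i.e.\ $m\equiv 2\ ({\rm mod}\ 3)$. The paper itself prints no proof for this theorem — it only states ``Similarly, we have the following theorem'' after the preceding result — so strictly there is nothing to compare line by line; your argument is in the same family as the paper's proofs of the neighbouring theorems (the $f(x+a)=f(x)$ reduction, summing the three Frobenius conjugates to get $a+b+c=0$, and deducing that $a/b$ is a root of a cubic over $\mathbb{F}_2$). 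What you do differently, and what it buys: you observe that the whole difference equation depends on $x$ only through the relative trace $T$, which lets you finish the elimination by hand instead of invoking a Magma resultant, and your converse produces a collision by choosing $a=\rho p^2$ with $p\in\mathbb{F}_8$ and solving $T^2+\rho T+\rho=0$ via Lemma \ref{two} for a $\rho$ of absolute trace zero, rather than exhibiting a fixed pair such as $f(\alpha)=f(1)$ as the paper does in its analogous converses. The only points needing care — that $T\neq 0,1$, that all three conjugate equations become proportional once $a^3=b^2c$ and $b^3=c^2a$ hold, and that a nonzero $\rho$ with ${\rm Tr}_q(1/\rho)=0$ exists (true since $m\equiv 2\ ({\rm mod}\ 3)$ forces $m\geq 2$) — are all handled or easily checked, so the argument stands as a complete and somewhat more self-contained proof than the one the paper gestures at.
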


\begin{thm}
	Let    $m, k, i$ be  positive  integers with $q=2^m$. Then
	\begin{equation*}
	f(x)=  x^{2^k+1} + x^{(2^k+1)q}+x^{(2^k+1)q^2}+x^{2^i}+x^{2^iq}
	\end{equation*}
	is a permutation polynomial over $\mathbb{F}_{q^3}$ if and only if 	${\rm gcd}(2^k+1,q-1)=1$.
\end{thm}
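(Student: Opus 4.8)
The plan is to reuse the differential/multivariate strategy of Theorems \ref{1} and \ref{6}: since $\mathbb{F}_{q^3}$ is finite, $f$ is a permutation polynomial if and only if, for every $a\in\mathbb{F}_{q^3}^*$, the equation $f(x+a)=f(x)$ has no solution $x\in\mathbb{F}_{q^3}$. First I would record that the five exponents assemble $f$ into $f(x)=\mathrm{Tr}_{q^3/q}(x^{2^k+1})+x^{2^i}+x^{2^iq}$, so that with $y=x^q$, $z=x^{q^2}$, $b=a^q$, $c=a^{q^2}$ one has $f(x)=x^{2^k+1}+y^{2^k+1}+z^{2^k+1}+x^{2^i}+y^{2^i}$. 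Expanding in characteristic two via $(x+a)^{2^k+1}=x^{2^k+1}+a^{2^k}x+ax^{2^k}+a^{2^k+1}$ and $(x+a)^{2^i}=x^{2^i}+a^{2^i}$, the equation $f(x+a)=f(x)$ reduces to
$$P+(a+b)^{2^i}=0,$$
where $P=a^{2^k}x+ax^{2^k}+b^{2^k}y+by^{2^k}+c^{2^k}z+cz^{2^k}+a^{2^k+1}+b^{2^k+1}+c^{2^k+1}$ and I have used $a^{2^i}+b^{2^i}=(a+b)^{2^i}$.

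The crucial observation is that $P$ is invariant under the $q$-th power map, since Frobenius merely cycles its summands; hence $P\in\mathbb{F}_q$. Applying the $q$- and $q^2$-Frobenius to the reduced equation then gives $P=(a+b)^{2^i}=(b+c)^{2^i}=(c+a)^{2^i}$, and because $t\mapsto t^{2^i}$ is a bijection we obtain $a+b=b+c=c+a$, i.e. $a=b=c$; as $b=a^q$ this forces $a\in\mathbb{F}_q$. Thus no collision exists for $a\in\mathbb{F}_{q^3}\backslash\mathbb{F}_q$, while for $a\in\mathbb{F}_q^*$ the $2^i$-terms cancel and the equation collapses to $P=0$, that is
$$a^{2^k}u+au^{2^k}+a^{2^k+1}=0,\qquad u:=x+y+z=\mathrm{Tr}_{q^3/q}(x)\in\mathbb{F}_q.$$
Dividing by $a^{2^k+1}$ and putting $v=u/a$ turns this into $v^{2^k}+v+1=0$.

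Since this condition constrains only $u=\mathrm{Tr}_{q^3/q}(x)$ and the trace is surjective onto $\mathbb{F}_q$, the reduction becomes sharp: if $v^{2^k}+v+1=0$ has a root $v_0\in\mathbb{F}_q$, then for any $a\in\mathbb{F}_q^*$ and any $x$ with $\mathrm{Tr}_{q^3/q}(x)=av_0$ we get $f(x+a)=f(x)$ with $a\neq0$, so $f$ is not injective; if it has no root in $\mathbb{F}_q$, then $f(x+a)=f(x)$ is impossible for all $a\neq0$ and $f$ is a permutation. Hence $f$ is a PP if and only if $v^{2^k}+v+1=0$ has no solution in $\mathbb{F}_q$.

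The remaining and most delicate step is to identify this solvability with the gcd condition. Write $d=\gcd(k,m)$ and $m=dm'$. On $\mathbb{F}_{q}=\mathbb{F}_{2^m}$ the additive operator $v\mapsto v^{2^k}+v$ equals $\tau+1$, where $\tau=\sigma^k$ generates the cyclic group $\mathrm{Gal}(\mathbb{F}_{2^m}/\mathbb{F}_{2^d})$ of order $m'$; by additive Hilbert~90 its image is the kernel of $\mathrm{Tr}_{\mathbb{F}_{2^m}/\mathbb{F}_{2^d}}$, so $v^{2^k}+v=1$ is solvable precisely when $\mathrm{Tr}_{\mathbb{F}_{2^m}/\mathbb{F}_{2^d}}(1)=m'\bmod 2=0$, i.e. when $m/\gcd(k,m)$ is even. (Necessity also follows elementarily from the telescoping identity $\sum_{j=0}^{m'-1}(v^{2^k}+v)^{2^{kj}}=v^{2^{km'}}+v=v+v=0$, which would have to equal $m'\cdot1$.) Invoking the classical fact that $\gcd(2^k+1,2^m-1)=1$ if and only if $m/\gcd(k,m)$ is odd, I conclude that $v^{2^k}+v+1=0$ has no root in $\mathbb{F}_q$ exactly when $\gcd(2^k+1,q-1)=1$, which proves the theorem. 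I expect the bookkeeping in the first reduction and this final number-theoretic equivalence to be the only genuine obstacles; everything else is formal. It is also worth noting that $i$ drops out entirely, its sole role being to force $a=b=c$.
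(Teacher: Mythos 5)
Your argument is correct, and it reaches the conclusion by a genuinely different route than the paper. The paper proves sufficiency by directly inverting the map: writing $f(x)=d=a^{2^i}$ and its two Frobenius conjugates as a trivariate system, it extracts $x+y=b+c$, $x+z=a+b$, $y+z=a+c$ from pairwise differences, substitutes back to get $(x+a+c)^{2^k+1}=(a+c)^{2^k+1}+(a+b)^{2^k+1}+(b+c)^{2^k+1}+a^{2^i}+b^{2^i}+c^{2^i}$, and concludes uniqueness of $x$ from $\gcd(2^k+1,q^3-1)=1$ (which it imports from $\gcd(2^k+1,q-1)=1$ via a cited lemma); for necessity it exhibits a multiplicative collision $f(\alpha)=f(1)$ with $\alpha$ a nontrivial element of order dividing $\gcd(2^k+1,q-1)$. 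You instead run the additive (differential) method, observe that $f(x+a)+f(x)=P+(a+b)^{2^i}$ with $P$ Frobenius-invariant, use this to force $a\in\mathbb{F}_q$, and reduce the entire question to the solvability of $v^{2^k}+v+1=0$ in $\mathbb{F}_q$, settled by additive Hilbert 90 together with the classical criterion $\gcd(2^k+1,2^m-1)=1\iff m/\gcd(k,m)$ odd. Your reduction is arguably more illuminating in that it handles both directions of the equivalence through a single Artin--Schreier condition and makes transparent why $i$ is irrelevant, at the cost of invoking two standard but nontrivial external facts; the paper's route is more self-contained on the sufficiency side (explicit inversion) and its converse is a two-line construction. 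Both proofs are complete; the only place where you lean on unproved assertions is the pair of classical lemmas at the end, which you should either cite precisely (the gcd criterion appears, e.g., in the literature on Niho exponents) or prove, as you in fact sketch for one direction via the telescoping identity.
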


\begin{proof}
 Assume that ${\rm gcd}(2^k+1,q-1)=1$. We shall give the proof of the sufficiency.   For each fixed  $d\in \mathbb{F}_{q^3}$, it suffices to prove that  the following equation
\begin{equation*}
x^{2^k+1} + x^{(2^k+1)q}+x^{(2^k+1)q^2}+x^{2^i}+x^{2^i}=d
\end{equation*} 	
has at most one solution in  $\mathbb{F}_{q^3}$.
Assume that $d=a^{2^i}$.
Let $y=x^q,z=y^q,b=a^q,c=b^q$.  Then we have the following equations of system, 
\begin{align}\label{10.1}
\begin{split}
\left \{
\begin{array}{ll}
x^{2^k+1} + y^{2^k+1}+z^{2^k+1}+x^{2^i}+y^{2^i}=a^{2^i},                   \\
x^{2^k+1} + y^{2^k+1}+z^{2^k+1}+y^{2^i}+z^{2^i}=b^{2^i},                  \\
x^{2^k+1} + y^{2^k+1}+z^{2^k+1}+x^{2^i}+z^{2^i}=c^{2^i}.               \\
\end{array}
\right.
\end{split}
\end{align}
From the above  system, we get $x+y=b+c, x+z=a+b, y+z=a+c $. Plugging the above three equations  into the first   equation of System  (\ref{10.1}), we can obtain
\begin{equation*}
(x+a+c)^{2^k+1}=(a+c)^{2^k+1}+(a+b)^{2^k+1}+(b+c)^{2^k+1}+a^{2^i}+b^{2^i}+c^{2^i}.
\end{equation*} 		
Since  		${\rm gcd}(2^k+1,q-1)=1$, we have that ${\rm gcd}(2^k+1,q^3-1)=1$ by Lemma 5 of \cite{KK}. Therefore,  $x=((a+c)^{2^k+1}+(a+b)^{2^k+1}+(b+c)^{2^k+1}+a^{2^i}+b^{2^i}+c^{2^i})^{\frac{1}{2^k+1}}+a+c$, which means that $x^{2^k+1} + x^{(2^k+1)q}+x^{(2^k+1)q^2}+x^{2^i}+x^{2^i}=d$ has at most one solution in  $\mathbb{F}_{q^3}$.

	Conversely, assume that ${\rm gcd}(2^k+1,q-1)=c\neq1$ and $\omega$ is a primitive 	element of  ${\mathbb F}_{q^3}$.   
	 Let $\alpha=\omega^{\frac{q^3-1}{c}}$. It is clear that
	 $\alpha\neq1, $ and $\alpha^{2^k+1}=1, \alpha^{q-1}=1$. Then we have $f(\alpha)=1=f(1)$ clearly, which is a contradiction.
	 This complete the proof.
\end{proof}

\section{QM equivalence}\label{section 7}

In this section, we  compare our results with those constructed in the previous works.

\begin{defn}(\cite{Wu})\label{QM} Two permutation polynomials $f(x)$ and $g(x)$ over $ \mathbb{F}_{p^n} $ are called quasi-multiplicative {\rm (QM)} equivalent if there is an integer $1\leq e\leq p^n-1$ with {\rm gcd}$(e,p^n-1)=1$ and
	$f(x)=\alpha g(\gamma x^e),$ where $p$ is a prime, $n$ is a positive integer and $\alpha,\gamma\in \mathbb{F}_{p^n}^{*}.$
	
\end{defn}

Note that two  {\rm QM} equivalent permutations have the same number of terms.
In Table  \ref{table1},
we list  all known permutation pentanomials over
$ \mathbb{F}_{q^3} ~(q=2^m)$ up to {\rm QM} 
equivalence. 

In order to show the non-QM equivalence of
permutation pentanomials presented in this paper, we use the strategy of \cite{Tu2018} as explained
below:
Let $q = 2^m$ and $f(x)=x^{a_0}+x^{a_1}+x^{a_1q}+x^{a_2}+x^{a_2q}$ be a proposed pentanomials for respective
values of $a_0, a_1, a_2$. Let $g(x) = s_0x^{b_0}+s_1x^{b_1}+s_2x^{b_1q}+s_3x^{b_2}+s_4x^{b_2q}$ be any already known permutation 
pentanomials. To show that $f(x)$ and $g(x)$ are not {\rm QM} equivalent over  ${\mathbb F}_{q^3}$, it is enough to
prove any one of the following two conditions:

\textbf{Condition 1}. Show that there does not exist any integer $d, 1 \leq d \leq q^3-1$, such that
${\rm gcd}(d, q^3-1) = 1$ and $\{da_0, da_1, da_1q, da_2, da_2q\}_{({\rm mod}\ q^3-1)} \equiv\{b_0, b_1, b_1q, b_2, b_2q   \} $,
where the subscript  "(${\rm mod}\ q^3-1$)" represents the set in which each entry
is taken ${\rm mod}\ (q^3-1)$.

\textbf{Condition 2}. If Condition 1 does not satisfy and we get some values of $d$, then compare
the coefficients of $f(x)$ and $c_1g(c_2x^d) $and show that they are not equal for
any $c_1, c_2 \in {\mathbb F}^*_{q^3}$.

Actually, the method in the following  proposition is   effective   to  verify the inequivalence of   our results.

\begin{prop}\label{qm}
	Let  $m>0$ be  integers,  $ q=2^m$. Then the permutation pentanomials
	$f_1(x)=x^2 + x+x^q+x^{2q^2+2q}+x^{2q^2+2}$ (Theorem \ref{6} in this paper )
	is not
	{\rm QM} equivalent to $f_2(x)=x^2+  x^{q^2}+ x+x^{q^2+1}+x^{q+1}$ (N0. 8 in Table (\ref{table1}))
	over $\mathbb{F}_{q^3}$.
\end{prop}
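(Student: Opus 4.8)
The plan is to establish \textbf{Condition 1} of the inequivalence test stated above: I will show that no integer $d$ with $\gcd(d,q^3-1)=1$ realizes the required exponent-multiset identity, so that $f_1$ and $f_2$ cannot be QM equivalent for any choice of coefficients. First I would put both pentanomials into the normalized shape $x^{a_0}+x^{a_1}+x^{a_1q}+x^{a_2}+x^{a_2q}$. For $f_1$ one reads off $a_0=2$, $a_1=1$ (so $a_1q=q$), and $a_2=2q^2+2q$, whose conjugate is $a_2q=2q^3+2q^2\equiv 2q^2+2\pmod{q^3-1}$; hence the exponent multiset is $S_1=\{2,\,1,\,q,\,2q^2+2q,\,2q^2+2\}$. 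For $f_2$ one gets $b_0=2$, $b_1=q^2$ (with $b_1q=q^3\equiv 1$), and $b_2=q^2+1$ (with $b_2q\equiv q+1$), so $S_2=\{2,\,q^2,\,1,\,q^2+1,\,q+1\}$. QM equivalence would then force a bijection realizing $d\cdot S_1\equiv S_2\pmod{q^3-1}$ for some unit $d$, and my goal is to rule this out.

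The key device is to reduce this congruence modulo $q-1$, which is legitimate since $(q-1)\mid(q^3-1)$: the very same bijection realizes $d\cdot S_1\equiv S_2\pmod{q-1}$, and $\gcd(d,q-1)=1$ continues to hold. Because $q\equiv q^2\equiv 1\pmod{q-1}$, the two multisets collapse to $S_1\equiv\{1,1,2,4,4\}$ and $S_2\equiv\{1,1,2,2,2\}\pmod{q-1}$. Now I would exploit that the value $1$ occurs with multiplicity two in $S_1\bmod(q-1)$ (coming from the original $1$ and $q$): both copies are sent to $d\bmod(q-1)$, so the residue $d\bmod(q-1)$ must occur at least twice in $\{1,1,2,2,2\}$. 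The only residues with multiplicity $\geq 2$ there are $1$ and $2$, so necessarily $d\equiv 1$ or $d\equiv 2\pmod{q-1}$.

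To finish I would invoke the sum constraint $d\cdot(1+1+2+4+4)\equiv(1+1+2+2+2)\pmod{q-1}$, that is $12\,d\equiv 8\pmod{q-1}$. For $d\equiv 1$ this gives $(q-1)\mid 4$, and for $d\equiv 2$ it gives $(q-1)\mid 16$; in either case, since $q-1=2^m-1$ is odd, its only admissible value is $1$, forcing $m=1$. Thus for every $m\geq 2$ no admissible $d$ exists, and $f_1$ and $f_2$ are not QM equivalent; the residual case $m=1$ is degenerate, as the exponents of $f_1$ then coincide (e.g.\ $x^q=x^2$) and it is not a genuine pentanomial. The one point that genuinely requires care is that the sum congruence by itself is too weak---when $3\nmid q-1$ it is always solvable for $d$---so the multiplicity argument pinning $d$ to $\{1,2\}\bmod(q-1)$ is the essential ingredient rather than a formality, and that is the step I would present most carefully.
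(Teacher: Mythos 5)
Your proof is correct, but it takes a genuinely different route from the paper's. The paper exploits the Frobenius structure of the two exponent multisets: both $S_1$ and $S_2$ contain two pairs of the form $(s,qs)\bmod(q^3-1)$, and since multiplication by a unit $d$ preserves the relation ``$t\equiv qs$'', the bijection must match conjugate pairs to conjugate pairs; this leaves the lone exponent $2$ forced onto $2$, whence $2d\equiv 2$ and $d=1$ (as $q^3-1$ is odd), which is incompatible with the rest of the matching. You instead reduce the multiset identity modulo $q-1$, getting $\{d,d,2d,4d,4d\}\equiv\{1,1,2,2,2\}$, pin $d$ to $1$ or $2\pmod{q-1}$ via the multiplicity of the residue $1$ on the left, and then use the sum congruence $12d\equiv 8$ to force $q-1\mid 4$ or $q-1\mid 16$, hence $m=1$. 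Both arguments are sound. The paper's conjugate-pair argument is the more structural one and is what generalizes to the other pentanomials of the shape $x^{a_0}+x^{a_1}+x^{a_1q}+x^{a_2}+x^{a_2q}$ in this family; your mod-$(q-1)$ invariant is a quick uniform check for $m\ge 2$, but it only succeeds because the residue patterns $\{1,1,2,4,4\}$ and $\{1,1,2,2,2\}$ happen to be incompatible, and it would be silent for pairs whose exponents agree modulo $q-1$. Your treatment of the leftover case $m=1$ is informal but defensible: for $q=2$ the two copies of $x^2$ in $f_1$ cancel, so $f_1$ reduces to a trinomial while $f_2$ stays a pentanomial, and QM equivalence preserves the number of terms (as the paper notes); you could state that invariant explicitly rather than calling the case ``degenerate.''
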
 
\begin{proof}
	Suppose $f_1$
	is {\rm QM} equivalent to $f_2$. 	Let  $a_0=2, a_1=1, a_2=q, a_3=2q^2+2q, a_4=2q^2+2$. By Definition \ref{QM}, there exists a 
	positive integer $0<d< q^3-1$ with ${\rm gcd}(d, q^3-1) = 1$ s.t., $\{a_0d, a_1d, a_2d, a_3d, a_4d \}_{({\rm mod}\ q^3-1)} \equiv\{2,  q^2, 1, q^2+1,q+1\} $.
 Assume that  $q^2\equiv a_id ~ ({\rm mod}\ q^3-1), 1\equiv a_jd ~({\rm mod}\ q^3-1),  
	q^2+1\equiv a_kd~({\rm mod}\ q^3-1), q+1\equiv a_ld~({\rm mod}\ q^3-1)$  for $0\leq i,j,k,l\leq 4$.
    Then we have $q^3-1\arrowvert a_j-a_iq$ and $q^3-1\arrowvert a_l-a_kq $. It is clear that $a_i=q, a_j=1, a_k=2q^2+2q, a_l=2q^2+2$ or $a_i=2q^2+2q, a_j=2q^2+2, a_k=q, a_l=1$.  In either case, we can deduce that   $2\equiv 2d ~({\rm mod}\ q^3-1)$, which implies that $d=1$.
	It is impossible.
\end{proof}

Similarly, we can see that the main results presented
in our paper are  {\rm QM} inequivalent to the known permutation pentanomial and 
are   {\rm QM} inequivalent to each other.

\begin{table}[h]
	\centering
	\caption{All   known permutation pentanomials over $ \mathbb{F}_{q^3}~(q=2^m) $ up to {\rm QM} 
		equivalence.   }
	\label{table1}
	\centering
	\scalebox{0.82}{
		\begin{tabular}{cccc}
			\hline
			No. & PPs   & Conditions   \\ \hline
			1& $ \epsilon x +\epsilon x^{q^2}+x^{\theta(q^3+q-q^2)}+x^{\theta(q^2+q-1)}+x^{\theta(q^2-q+1)}$  &$\theta=2^i, i\geq0,m>0$  &\cite{Zhang3}   \\ \hline
			2 &$\epsilon x+\epsilon x^q+x^\theta+x^{\theta q}+x^{\theta q^2}$   &$\theta=2^i, i\geq0,m>0$   &\cite{Zhang3}   \\ \hline
			3& $\epsilon x+\epsilon x^{q}+x^{\theta(q+2)}+x^{\theta(q^2+2q)}+x^{\theta(2q^2+1)}$    &$\theta=2^i, i\geq0,$ $m$ is odd    &\cite{Zhang3}    \\ \hline
			4 &  $\epsilon x+\epsilon x^q+\epsilon x^{q^2}+x^{\theta(3q^2-q-1)}+x^{\theta(3q^3-q^2-q)}$   &$\theta=2^i, i\geq0,m>0$   &\cite{Zhang3}      \\ \hline
			5 &  $\epsilon x^{\theta}+\epsilon x^{\theta q}+\epsilon x^{\theta q^2}+x^{3q^2-q-1}+x^{3q^3-q^2-q}$   &$\theta=2^i, i\geq0,m>0$   &\cite{Zhang3}   \\ \hline
			6 &$x^\theta+x^{\theta q}+ x^{\theta q^2}+\epsilon x^{q^3-q^2+q}+\epsilon x^{q^2+q-1}$   &$\theta=2^i, i\geq0,m>0$       &\cite{Zhang3}   \\ \hline
			7 & $\epsilon x+\epsilon x^q+\epsilon x^{q^2}+ x^\theta+x^{\theta q}$   &$\theta=2^i, i\geq0,m>0$   &\cite{Zhang3}   \\ \hline
			8 & $x^2+\epsilon x^{q^2}+\epsilon x+x^{q^2+1}+x^{q+1}$  &$m$ is odd   &\cite{Zhang3}    \\ \hline
			9 & $\epsilon x+x^2+x^{2q}+x^{q^2+1}+x^{q+1}$   &$m\not\equiv 2~({\rm mod}\ 3)$    &\cite{Zhang3}    \\ \hline
			10 & $\epsilon x+x^2+x^{2q^2}+x^{q^2+1}+x^{q+1}$    &$m\not\equiv 1~({\rm  mod}\  3)$  &\cite{Zhang3}   \\ \hline
			11 &$\epsilon x+x^{2q^2+1}+x^{q+2}+x^{q^2+2}+x^{2q+1}$   &$m>0$  &\cite{Zhang3}  \\ \hline
			12 &$x^4+ \epsilon x^{q^2+1}+\epsilon x^{q+1}+x^{2q^2+2}+x^{2q+2}$   &$m>0$   &\cite{Zhang3}    \\	\hline
			13 &$x^4+ x^{4q}+  x^{4q^2}+\epsilon x^{q+1}+\epsilon x^{q^2+q}$  & $m>0$    &\cite{Zhang3}   \\	\hline
			
			14 &$x^4+ \epsilon x^{q}+ \epsilon  x^{q^2}+x^{2q^2+2}+x^{2q+2}$  &$m>0$  &\cite{Zhang3}  \\	\hline
			
			15 &$x+x^4+x^{4q}+x^{2}+x^{2q}$  &$m\not\equiv 5~({\rm mod}\ 15)$  &\cite{Zhang3}    \\	\hline

			16 &$x^4+x^q +x^{q^2}+x^{2}+x^{2q}$  &$m>0$   &\cite{Zhang3}  \\	\hline
			
			17 &$x^4+x +x^{q}+x^{2}+x^{2q}$  &$m\not\equiv 10~({\rm mod}\ 15)$    &\cite{Zhang3}  \\	\hline

			18 &$x^4+x +x^{q}+x^{2q}+x^{2q^2}$  &$m\not\equiv 10~({\rm mod}\ 15)$   &\cite{Zhang3}   \\	\hline

			19 &$x^4+x^{2q^2+2} +x^{2q+2}+x^{q^2-q+1}+x^{q^3-q^2+q}$   &$m>0$  &\cite{Zhang3}  \\	\hline

		  20 &$x+x^{ q}+ x^{q^2}+a x^{q^2+q-1}+ bx^{q^2-q+1}$   &see Thm. 3.1  &\cite{Wang-Zha}  \\	\hline 
	\end{tabular}}
\end{table}

\section{Conclusion}\label{section 8}

 Let $q=2^m$. In this paper, inspired by the methods in \cite{LiKK}, we further characterize the  
permutation property of the  pentanomials with  the  form  (\ref{0}) over ${\mathbb F}_{q^{3}}~(  L(x)=x+x^q, d_0=1,2,4)$.
With the help of Magma, we have searched for all the  permutation pentanomials with the form 
$x^{d_0}+L(x^{s_2q^2+s_1q+s_0}+x^{t_2q^2+t_1q+t_0})$ over ${\mathbb F}_{q^3}$, where $ L(x)=x+x^q, d_0=1,2,4, s_i, t_j\in[-4,4]$, $0\leq i,j\leq 2$.
We have found   effective methods to solve all  the permutation pentanomials  with the above form and conditions, 
although there   are still  some permutation pentanomials we   discovered  that have not been listed due to their similar proofs.
Based on   the
  numerical results, we propose the following conjecture.

\begin{conjecture}
	Let    $m, k$ be positive  integers with $q=2^m$. Then
\begin{equation*}
f_1(x)=  x^{2^k+1}+x^{2^kq^2+1}+x^{q+2^k}+x^{q^2+2^k}+x^{2^kq+1} 
\end{equation*}
and 
\begin{equation*} 
f_2(x)= x^{2^k+1}+x^{(2^k+1)q}+x^{q+2^k}+x^{q^2+2^k}+x^{2^kq^2+q}
\end{equation*}
are permutation polynomials over $\mathbb{F}_{q^3}$ if and only if ${\rm gcd}(2^k+1,q-1)=1$.

\end{conjecture}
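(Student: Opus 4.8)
The plan is to handle the two pentanomials in parallel, first extracting a clean trace description of each and then splitting into the (routine) necessity and the (genuinely hard) sufficiency. Writing $y=x^q$, $z=x^{q^2}$ and $T={\rm Tr}_{q^3/q}(x)=x+y+z\in\mathbb{F}_q$, a direct collection of the five monomials of $f_1$ gives
\[
f_1(x)=x^{2^k+1}+x^{2^k}T+xT^{2^k}=(x+T)^{2^k+1}+T^{2^k+1},
\]
where the second equality uses $(x+T)^{2^k}=x^{2^k}+T^{2^k}$ in characteristic two. Since $T\in\mathbb{F}_q$ we have ${\rm Tr}_{q^3/q}(T)=3T=T$, so the $\mathbb{F}_q$-linear map $x\mapsto(w,T):=(x+T,\,{\rm Tr}_{q^3/q}(x))$ is readily seen to be a bijection from $\mathbb{F}_{q^3}$ onto $H\times\mathbb{F}_q$, where $H=\{\,w\in\mathbb{F}_{q^3}:{\rm Tr}_{q^3/q}(w)=0\,\}$ is the trace-zero hyperplane; under it $f_1$ becomes $f_1=w^{2^k+1}+T^{2^k+1}$ with $(w,T)\in H\times\mathbb{F}_q$. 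The same substitution turns $f_2$ into $f_2(x)=x^{2^k}T+y(x+T)^{2^k}=x^{2^k+1}+x^{2^k}z+yT^{2^k}$, which is \emph{not} a single $(2^k+1)$-th power, so $f_2$ will require extra work.

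For \textbf{necessity}, restrict to $x\in\mathbb{F}_q$: then $y=z=x$, $T=x$, and both reformulations collapse to $f_1(x)=f_2(x)=x^{2^k+1}$. Since $x\mapsto x^{2^k+1}$ permutes $\mathbb{F}_q$ precisely when ${\rm gcd}(2^k+1,q-1)=1$, the gcd condition is necessary for either $f_i$ to permute $\mathbb{F}_{q^3}$. For the converse I would first record, via Lemma~5 of \cite{KK}, that ${\rm gcd}(2^k+1,q-1)=1$ forces ${\rm gcd}(2^k+1,q^3-1)=1$; hence $u\mapsto u^{2^k+1}$ is a bijection both of $\mathbb{F}_{q^3}$ and of $\mathbb{F}_q$.

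For \textbf{sufficiency of $f_1$}, the coordinate maps $w\mapsto w^{2^k+1}$ (injective on $H$ by the above, with image $S$ of size $q^2$) and $T\mapsto T^{2^k+1}$ (bijective on $\mathbb{F}_q$) are both bijections, so $f_1$ is a permutation if and only if $(s,t)\mapsto s+t$ is a bijection $S\times\mathbb{F}_q\to\mathbb{F}_{q^3}$, i.e. if and only if $S=\{w^{2^k+1}:w\in H\}$ is a complete set of coset representatives of $\mathbb{F}_q$ in $(\mathbb{F}_{q^3},+)$. As $|S|=q^2$ equals the number of cosets, this is equivalent to the injectivity statement: for $w_1,w_2\in H$, $w_1^{2^k+1}+w_2^{2^k+1}\in\mathbb{F}_q$ implies $w_1=w_2$. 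The diagonal case is already accessible: if $w\in H$, $w\neq0$, and $w^{2^k+1}\in\mathbb{F}_q$, then ${\rm ord}(w)\mid{\rm gcd}\big((2^k+1)(q-1),\,q^3-1\big)=(q-1){\rm gcd}(2^k+1,q^2+q+1)=q-1$ (the last gcd being $1$ since $q^2+q+1\mid q^3-1$), whence $w\in\mathbb{F}_q\cap H=\{0\}$, a contradiction. The \emph{main obstacle} is the full off-diagonal statement, which does not reduce to a single power; equivalently, one must show that for every $d$ there is a unique $s\in\mathbb{F}_q$ with ${\rm Tr}_{q^3/q}\big((d+s)^{\rho}\big)=0$, where $\rho=(2^k+1)^{-1}\bmod(q^3-1)$.

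For $f_2$ the analogous maneuver is messier. Summing the three Frobenius conjugates of $f_2(x)=d$ now yields only the \emph{nonlinear} symmetric relation $x^{2^k}y+y^{2^k}z+z^{2^k}x={\rm Tr}_{q^3/q}(d)$, so one cannot read off the pairwise sums $x+y,\,y+z,\,z+x$ as in the already-proved theorems; absent a perfect-power reformulation, the natural route is to eliminate $y,z$ and analyze the resulting resultant. The difficulty peculiar to this conjecture is that the exponent $2^k+1$ is a \emph{parameter}: unlike the fixed-exponent pentanomials treated earlier, the resultant is a polynomial whose very shape varies with $k$, so Magma cannot discharge it uniformly. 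I therefore expect the decisive step, for both $f_1$ and $f_2$, to be a structural, uniform-in-$k$ argument establishing the coset-representative (transversal) property and its $f_2$-analogue; the verification in small cases — e.g. $m=1,\,k=1$ over $\mathbb{F}_8$, where $\{w^{3}:w\in H\}$ is indeed a transversal of $\mathbb{F}_2$ — strongly suggests the statement is true and points to this transversal property as the real heart of the problem.
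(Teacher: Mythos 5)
You should note first that the paper offers no proof of this statement at all: it is posed as an open conjecture supported only by Magma searches, so there is no argument of the authors' to compare yours against. Judged on its own terms, your attempt contains a correct and genuinely useful reduction, but it is not a proof. The identity $f_1(x)=(x+T)^{2^k+1}+T^{2^k+1}$ with $T={\rm Tr}_{q^3/q}(x)$ checks out (expanding $(x+T)^{2^k+1}=x^{2^k+1}+xT^{2^k}+Tx^{2^k}+T^{2^k+1}$ recovers exactly the five monomials), the bijection $x\mapsto(x+T,T)$ onto $H\times\mathbb{F}_q$ is right, and the necessity direction via restriction to $\mathbb{F}_q$, where both $f_1$ and $f_2$ collapse to $x^{2^k+1}$, is sound. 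Your diagonal computation ${\rm ord}(w)\mid\gcd\bigl((2^k+1)(q-1),q^3-1\bigr)=q-1$ is also correct once one invokes $\gcd(2^k+1,q^3-1)=1$.

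The genuine gap is exactly where you say it is, and it is the entire content of the conjecture: you reduce the sufficiency for $f_1$ to the claim that $S=\{w^{2^k+1}:w\in H\}$ is a transversal of $\mathbb{F}_q$ in $(\mathbb{F}_{q^3},+)$, equivalently that $w_1^{2^k+1}+w_2^{2^k+1}\in\mathbb{F}_q$ with $w_1,w_2\in H$ forces $w_1=w_2$, and then you do not prove it beyond the case $w_2=0$ and one numerical instance. Nothing in your argument rules out two distinct trace-zero elements whose $(2^k+1)$-st powers differ by an element of $\mathbb{F}_q$; the cyclic-order argument that handles the diagonal does not extend, since $w_1^{2^k+1}+w_2^{2^k+1}$ is not a power of a single element. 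For $f_2$ the situation is worse: you obtain only the non-split form $x^{2^k+1}+x^{2^k}z+yT^{2^k}$ and explicitly defer to an elimination that you acknowledge cannot be carried out uniformly in $k$. So what you have is a promising restatement of the conjecture (the transversal formulation may well be the right lever, and would be worth recording as a lemma-to-be-proved), not a resolution of it; the if-and-only-if claim remains open after your argument exactly as it was before.
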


\end{document}